\def\C{\mathscr{C}}
\def\I{\mathcal{I}}
\def\P{\mathcal{P}}
\def\E{\mathbb{E}}
\def\s{\mathfrak{s}}
\def\Id{\mathrm{Id}}
\def\del{\delta}
\def\dr{\ar@{->}[r]}
\def\id{\mbox{\rm id}\,}\def\Im{\mbox{\rm Im}\,}\def\Ker{\mbox{\rm Ker}\,}
\newcommand{\coresdim}{{\rm coresdim}}
\newcommand{\resdim}{{\rm resdim}}
\def\X{\mathscr{X}}
\def\add{\mbox{add}}
\def\Hom{\mbox{Hom}}
\begin{document}
\baselineskip=15pt
\title{\Large{\bf  $\bm{n}$-cotorsion pairs and recollements of extriangulated categories
\footnotetext{$^\ast$Corresponding author.  ~Jian He is supported by the National Natural Science Foundation of China (Grant No. 12171230) and Youth Science and Technology Foundation of Gansu Provincial (Grant No. 23JRRA825). Jing He is supported by the Hunan Provincial Natural Science Foundation of China (Grant No. 2023JJ40217).} }}
\medskip
\author{Jian He and Jing He$^\ast$}

\date{}

\maketitle
\def\blue{\color{blue}}
\def\red{\color{red}}

\newtheorem{theorem}{Theorem}[section]
\newtheorem{lemma}[theorem]{Lemma}
\newtheorem{corollary}[theorem]{Corollary}
\newtheorem{proposition}[theorem]{Proposition}
\newtheorem{conjecture}{Conjecture}
\theoremstyle{definition}
\newtheorem{definition}[theorem]{Definition}
\newtheorem{question}[theorem]{Question}
\newtheorem{notation}[theorem]{Notation}
\newtheorem{remark}[theorem]{Remark}
\newtheorem{remark*}[]{Remark}
\newtheorem{example}[theorem]{Example}
\newtheorem{example*}[]{Example}

\newtheorem{construction}[theorem]{Construction}
\newtheorem{construction*}[]{Construction}

\newtheorem{assumption}[theorem]{Assumption}
\newtheorem{assumption*}[]{Assumption}

\baselineskip=17pt
\parindent=0.5cm

\begin{abstract}
\baselineskip=16pt
In this article, we prove that if  $(\mathcal A ,\mathcal B,\mathcal C)$ is  a recollement of extriangulated categories, then $n$-cotorsion pairs in $\mathcal A$ and $\mathcal C$ can induce $n$-cotorsion pairs in $\mathcal B$. Conversely, this holds true under natural assumptions.  Besides, we give mild conditions on a pseudo cluster tilting subcategory on the middle category of a recollement of extriangulated categories,  for the corresponding additive quotients to form a recollement of semi-abelian categories.\\[2mm]
\textbf{Keywords:} $n$-cotorsion pair; recollement; extriangulated categories. \\[2mm]
\textbf{ 2020 Mathematics Subject Classification:} 18G80; 18E10; 18E40
\end{abstract}
\medskip

\pagestyle{myheadings}
\markboth{\rightline {\scriptsize Jian He and Jing He\hspace{2mm}}}
         {\leftline{\scriptsize  $n$-cotorsion pairs and recollements of extriangulated categories}}

\section{Introduction}
The concept of a recollement in triangulated categories was first introduced by Beilinson, Bernstein and Deligne, as detailed in \cite{BBD}. A fundamental example of a recollement situation of abelian categories appeared in the construction of perverse sheaves by MacPherson and Vilonen \cite{MV}.  Recollements, both in abelian and triangulated categories, play a pivotal role in various domains such as ring theory, representation theory, and the geometry of singular spaces.

Motivated by certain properties exhibited by Gorenstein projective and Gorenstein injective modules over an Iwanaga-Gorenstein ring, Huerta, Mendoza, and P\'{e}rez \cite{HMP} introduced the concept of left $n$-cotorsion (resp. right $n$-cotorsion, $n$-cotorsion) pairs in an abelian category $\mathcal{C}$. 
It's noteworthy that $1$-cotorsion pairs coincide with the concept of complete cotorsion pairs. Let $(\mathcal{A}, \mathcal{B}, \mathcal{C})$ be a recollement of abelian categories. Recently, Cao, Wei and Wu \cite{CWW} described how to glue together $n$-cotorsion pairs in $\mathcal A$ and $\mathcal C$ to obtain an $n$-cotorsion pair in $\mathcal B$.

Nakaoka and Palu introduced the concept of extriangulated categories in their seminal work \cite{NP}. This notion simultaneously generalizes exact categories and triangulated categories. Exact categories, which include abelian categories, and extension-closed subcategories within extriangulated categories are considered as specific instances of extriangulated categories. Moreover, there exist additional instances of extriangulated categories that do not fall within the categories of exact or triangulated, as documented in \cite{NP,ZZ,HZZ}.
Wang, Wei, and Zhang \cite{WWZ} introduced the recollement of extriangulated categories, which is a simultaneous generalization of recollements of abelian categories and triangulated categories. They also provided conditions under which the glued pair with respect to cotorsion pairs in $\mathcal{A}$ and $\mathcal{C}$ forms a cotorsion pair in $\mathcal{B}$ for a recollement $(\mathcal{A},\mathcal{B},\mathcal{C})$ of extriangulated categories.
He and Zhou \cite{HZ} introduced $n$-cotorsion pairs in an extriangulated category with enough projectives and enough injectives. They demonstrated a one-to-one correspondence between $n$-cotorsion pairs and $(n+1)$-cluster tilting subcategories.
He and He \cite{HH} introduced pseudo cluster tilting subcategories $\mathcal{X}$ in an extriangulated category $\mathcal{C}$. They established that the quotient category $\mathcal{C}/\mathcal{X}$, obtained by dividing an extriangulated category by a pseudo cluster tilting subcategory, becomes a semi-abelian category. Moreover, they showed that $\mathcal{C}/\mathcal{X}$ achieves the status of an abelian category if and only if specific self-orthogonal conditions are satisfied.

Motivated by this, we consider to extend and cover relevant results of Cao, Wei and Wu \cite{CWW}. Suppose that $\mathcal B$ admits a recollement relative to extriangulated categories $\mathcal A$ and $\mathcal C$.
Our first main result describes how to glue together $n$-cotorsion pairs $(\mathcal T_1,\mathcal F_1)$ in $\mathcal A$ and $(\mathcal T_2,\mathcal F_2)$ in $\mathcal C$, to obtain an $n$-cotorsion pair
$(\mathcal T,\mathcal F)$ of $\mathcal B$, see Theorem \ref{main}. This extends their results of Cao, Wei and Wu \cite{CWW} in the framework of extriangulated categories. In the reverse direction, our second main result gives sufficient conditions on an $n$-cotorsion pair $(\mathcal T,\mathcal F)$ of $\mathcal B$, relative to the functors involved in the recollement, to induce $n$-cotorsion pairs in $\mathcal A$ and $\mathcal B$, see Theorem \ref{main1}.  Our third main result constructs a recollement
of semi-abelian categories from a recollement of extriangulated categories, see Theorem \ref{zh}.

This article is organized as follows. In Section 2, we give some terminologies
and some preliminary results. In Section 3, we prove our first and second main
results. In Section 4, we prove our third main
result.

\section{Preliminaries}
We briefly recall some definitions and basic properties of extriangulated categories from \cite{NP}.
We omit some details here, but the reader can find them in \cite{NP}.

Let $\mathcal{C}$ be an additive category equipped with an additive bifunctor
$$\mathbb{E}: \mathcal{C}^{\rm op}\times \mathcal{C}\rightarrow {\rm Ab},$$
where ${\rm Ab}$ is the category of abelian groups. For any objects $A, C\in\mathcal{C}$, an element $\delta\in \mathbb{E}(C,A)$ is called an $\mathbb{E}$-extension.
Let $\mathfrak{s}$ be a correspondence which associates an equivalence class $$\mathfrak{s}(\delta)=\xymatrix@C=0.8cm{[A\ar[r]^x
 &B\ar[r]^y&C]}$$ to any $\mathbb{E}$-extension $\delta\in\mathbb{E}(C, A)$. This $\mathfrak{s}$ is called a {\it realization} of $\mathbb{E}$, if it makes the diagrams in \cite[Definition 2.9]{NP} commutative.

A triplet $(\mathcal{C}, \mathbb{E}, \mathfrak{s})$ is called an {\it extriangulated category} if it satisfies the following conditions.
\begin{itemize}
\item $\mathbb{E}\colon\mathcal{C}^{\rm op}\times \mathcal{C}\rightarrow \rm{Ab}$ is an additive bifunctor.

\item $\mathfrak{s}$ is an additive realization of $\mathbb{E}$.

\item $\mathbb{E}$ and $\mathfrak{s}$  satisfy the compatibility conditions $(\rm ET3),(\rm ET3)^{\rm op},(\rm ET4),(\rm ET4)^{\rm op}$ in \cite[Definition 2.12]{NP}.
\end{itemize}

We collect the following terminology from \cite{NP}.

\begin{definition}
Let $(\mathcal{C},\E,\s)$ be an extriangulated category.
\begin{itemize}
\item[(1)] A sequence $A\xrightarrow{~x~}B\xrightarrow{~y~}C$ is called a {\it conflation} if it realizes some $\E$-extension $\del\in\E(C,A)$.
    In this case, $x$ is called an {\it inflation} and $y$ is called a {\it deflation}.

\item[(2)] If a conflation  $A\xrightarrow{~x~}B\xrightarrow{~y~}C$ realizes $\delta\in\mathbb{E}(C,A)$, we call the pair $( A\xrightarrow{~x~}B\xrightarrow{~y~}C,\delta)$ an {\it $\E$-triangle}, and write it in the following way.
$$A\overset{x}{\longrightarrow}B\overset{y}{\longrightarrow}C\overset{\delta}{\dashrightarrow}$$
We usually do not write this $``\delta"$ if it is not used in the argument.

\item[(3)] Let $A\overset{x}{\longrightarrow}B\overset{y}{\longrightarrow}C\overset{\delta}{\dashrightarrow}$ and $A^{\prime}\overset{x^{\prime}}{\longrightarrow}B^{\prime}\overset{y^{\prime}}{\longrightarrow}C^{\prime}\overset{\delta^{\prime}}{\dashrightarrow}$ be any pair of $\E$-triangles. If a triplet $(a,b,c)$ realizes $(a,c)\colon\delta\to\delta^{\prime}$, then we write it as
$$\xymatrix{
A \ar[r]^x \ar[d]^a & B\ar[r]^y \ar[d]^{b} & C\ar@{-->}[r]^{\del}\ar[d]^c&\\
A'\ar[r]^{x'} & B' \ar[r]^{y'} & C'\ar@{-->}[r]^{\del'} &}$$
and call $(a,b,c)$ a {\it morphism of $\E$-triangles}.

\item[{\rm (4)}] An object $P\in\mathcal{C}$ is called {\it projective} if
for any $\E$-triangle $A\overset{x}{\longrightarrow}B\overset{y}{\longrightarrow}C\overset{\delta}{\dashrightarrow}$ and any morphism $c\in\mathcal{C}(P,C)$, there exists $b\in\mathcal{C}(P,B)$ satisfying $yb=c$.
We denote the subcategory of projective objects by $\mathcal P\subseteq\mathcal{C}$. Dually, the subcategory of injective objects is denoted by $\mathcal I\subseteq\mathcal{C}$.

\item[{\rm (5)}] We say that $\mathcal C$ {\it has enough projective objects} if
for any object $C\in\mathcal C$, there exists an $\E$-triangle
$A\overset{x}{\longrightarrow}P\overset{y}{\longrightarrow}C\overset{\delta}{\dashrightarrow}$
satisfying $P\in\mathcal P$. Dually we can define $\mathcal C$ {\it has enough injective objects}.
\end{itemize}
\end{definition}
Let $\mathcal C$ be extriangulated category with enough projectives and enough injectives, and $\mathcal X$ a subcategory of $\mathcal C$.
We denote $\Omega\mathcal X={\rm CoCone}(\mathcal P,\mathcal X)$, that is to say, $\Omega\mathcal X$ is the subcategory of $\mathcal C$ consisting of
objects $\Omega X$ such that there exists an $\E$-triangle:
$$\Omega X\overset{a}{\longrightarrow}P\overset{b}{\longrightarrow}X\overset{}{\dashrightarrow},$$
with $P\in\P$ and $X\in\mathcal X$. We call $\Omega$ the \emph{syzygy} of $\mathcal X$.
Dually we define the \emph{cosyzygy} of $\mathcal X$ by $\Sigma\mathcal X={\rm Cone}(\mathcal X,\I)$.
Namely, $\Sigma\mathcal X$ is the subcategory of $\mathcal C$ consisting of
objects $\Sigma X$ such that there exists an $\E$-triangle:
$$X\overset{c}{\longrightarrow}I\overset{d}{\longrightarrow}\Sigma X\overset{}{\dashrightarrow},$$
with $I\in\I$ and $X\in\mathcal X$. For more details, see \cite[Definition 4.2 and Proposition 4.3]{LN}.

For a subcategory $\mathcal X\subseteq\mathcal C$, put $\Omega^0\mathcal X=\mathcal X$, and define $\Omega^k\mathcal X$ for $k>0$ inductively by
$$\Omega^k\mathcal X=\Omega(\Omega^{k-1}\mathcal X)={\rm CoCone}(\P,\Omega^{k-1}\mathcal X).$$
We call $\Omega^k\mathcal X$ the $k$-th syzygy of $\mathcal X$. Dually we define the $k$-th cosyzygy $\Sigma^k\mathcal X$ by
$\Sigma^0\X=\mathcal X$ and $\Sigma^k\mathcal X={\rm Cone}(\Sigma^{k-1}\mathcal X,\I)$ for $k>0$.

Liu and Nakaoka \cite{LN} defined higher extension groups in an extriangulated category with have
enough projectives and enough injectives as $\E(A,\Sigma^kB)\simeq\E(\Omega^kA,B)$ for $k\geq0$.
For convenience, we denote $\E(A,\Sigma^kB)\simeq\E(\Omega^kA,B)$ by $\E^{k+1}(A,B)$ for $k\geq0$.
They proved the following.
\begin{lemma}\label{022}
Let $\mathcal C$ be an extriangulated category with enough projectives and enough injectives. Assume that
$$\xymatrix{A\ar[r]^{f}& B\ar[r]^{g}&C\ar@{-->}[r]^{\delta}&}$$
is an $\E$-triangle in $\mathcal C$. Then for any object $X\in\mathcal C$ and $k\geq1$, we have the following exact sequences:
$$\cdots\xrightarrow{}\E^k(X,A)\xrightarrow{}\E^k(X,B)\xrightarrow{}\E^k(X,C)\xrightarrow{}\E^{k+1}(X,A)\xrightarrow{}\E^{k+1}(X,B)\xrightarrow{}\cdots;$$
$$\cdots\xrightarrow{}\E^k(C,X)\xrightarrow{}\E^k(B,X)\xrightarrow{}\E^k(A,X)\xrightarrow{}\E^{k+1}(C,X)\xrightarrow{}\E^{k+1}(B,X)\xrightarrow{}\cdots.$$
\end{lemma}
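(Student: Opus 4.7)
The plan is to establish both long exact sequences by induction on $k$, with the engine being the standard six-term exact sequence associated to an $\E$-triangle.

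For the base case $k=1$, I would appeal to \cite[Corollary 3.12]{NP}: applying $\Hom(X,-)$ to the $\E$-triangle $A \to B \to C \overset{\delta}{\dashrightarrow}$ (resp.\ $\Hom(-,X)$) produces the six-term exact sequences
$$\Hom(X,A)\to\Hom(X,B)\to\Hom(X,C)\to\E(X,A)\to\E(X,B)\to\E(X,C),$$
$$\Hom(C,X)\to\Hom(B,X)\to\Hom(A,X)\to\E(C,X)\to\E(B,X)\to\E(A,X),$$
which is precisely the statement at $k=1$ (together with the initial $\Hom$ segment).

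For the inductive step on the first sequence, I would use that $\mathcal C$ has enough projectives to pick an $\E$-triangle $\Omega X \to P \to X \dashrightarrow$ with $P\in\P$. Applying $\Hom(-,Y)$ and $\E(-,Y)$ to this $\E$-triangle and using $\E(P,Y)=0$ identifies $\E(X,Y)$ with the cokernel of $\Hom(P,Y) \to \Hom(\Omega X, Y)$; more generally, this argument yields the dimension-shift isomorphism $\E^{j+1}(X,-)\cong\E^{j}(\Omega X,-)$ for $j\geq 1$, matching the definition $\E^{k+1}(X,-)=\E(\Omega^{k} X,-)$. Applying the base case six-term sequence to $\Omega X$ in place of $X$ for the same $\E$-triangle $A\to B\to C \dashrightarrow$ and relabeling via this dimension shift yields the block
$$\E^2(X,A)\to\E^2(X,B)\to\E^2(X,C)\to\E^3(X,A)\to\E^3(X,B)\to\E^3(X,C).$$
Splicing with the $k=1$ block via a connecting morphism $\E(X,C)\to\E^2(X,A)$, constructed by factoring the connecting morphism $\Hom(\Omega X, C)\to \E(\Omega X, A)$ through the quotient $\Hom(\Omega X, C)\twoheadrightarrow \E(X, C)$, gives a longer exact sequence. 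Iterating with the higher syzygies $\Omega^2 X, \Omega^3 X,\ldots$ produces the full sequence. The second sequence follows by the dual argument using enough injectives and the cosyzygies $\Sigma^k$.

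The main obstacle is the splicing step: one must verify that the constructed connecting morphism $\E^k(X,C)\to\E^{k+1}(X,A)$ is well-defined independent of the chosen syzygy presentation, and that exactness holds at each junction of consecutive six-term blocks. Well-definedness amounts to checking that any morphism $\Omega X \to C$ factoring through $P$ also factors through $B$, which follows because $P\in\P$ lifts along the deflation $B\to C$; exactness at the junctions is then a standard diagram chase exploiting the naturality of the six-term sequence in both variables and the vanishing of $\E^j$ on projectives. Alternatively, the entire argument can be packaged uniformly via the machinery developed by Liu and Nakaoka in \cite{LN}.
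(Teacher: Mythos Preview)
The paper does not actually prove this lemma; it is stated as a result due to Liu and Nakaoka and simply attributed to \cite{LN} with the sentence ``They proved the following.'' So there is no in-paper argument to compare against.

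Your sketch is the standard dimension-shifting proof and is correct in outline. The base case is exactly \cite[Corollary~3.12]{NP}, and the inductive step via the syzygy $\Omega X$ (dually $\Sigma$) is precisely how the higher $\E^k$ are defined in \cite{LN}, so your identification $\E^{j+1}(X,-)\cong\E^{j}(\Omega X,-)$ is immediate from the definitions rather than something to be derived. Your treatment of the splicing step is also right: the key point, that a map $\Omega X\to C$ factoring through the projective $P$ lies in the image of $\Hom(\Omega X,B)\to\Hom(\Omega X,C)$ and hence dies in $\E(\Omega X,A)$, is exactly the argument needed for well-definedness of the connecting map, and exactness at the junction then follows from the surjectivity of $\Hom(\Omega X,C)\to\E(X,C)$ combined with exactness of the six-term sequence for $\Omega X$. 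Since the paper merely cites \cite{LN}, your write-up in fact supplies more detail than the paper does; you could equally well replace the whole argument by a direct reference to \cite[Proposition~5.2]{LN}, as you note at the end.
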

Next, we always assume that any extrianglated category satisfies the (WIC) condition,
see \cite[Condition 5.8]{NP}. We briefly recall the concepts and basic properties of recollements of extriangulated categories from \cite{WWZ}.
We omit some details here, but the reader can find them in \cite{WWZ}.

\begin{definition}\label{recollement}{\rm \cite[Definition 3.1]{WWZ}}
Let $\mathcal{A}$, $\mathcal{B}$ and $\mathcal{C}$ be three extriangulated categories. A \emph{recollement} of $\mathcal{B}$ relative to
$\mathcal{A}$ and $\mathcal{C}$, denoted by ($\mathcal{A}$, $\mathcal{B}$, $\mathcal{C}$), is a diagram
\begin{equation}\label{recolle}
  \xymatrix{\mathcal{A}\ar[rr]|{i_{*}}&&\ar@/_1pc/[ll]|{i^{*}}\ar@/^1pc/[ll]|{i^{!}}\mathcal{B}
\ar[rr]|{j^{\ast}}&&\ar@/_1pc/[ll]|{j_{!}}\ar@/^1pc/[ll]|{j_{\ast}}\mathcal{C}}
\end{equation}
given by two exact functors $i_{*},j^{\ast}$, two right exact functors $i^{\ast}$, $j_!$ and two left exact functors $i^{!}$, $j_\ast$, which satisfies the following conditions:
\begin{itemize}
  \item [(R1)] $(i^{*}, i_{\ast}, i^{!})$ and $(j_!, j^\ast, j_\ast)$ are adjoint triples.
  \item [(R2)] $\Im i_{\ast}=\Ker j^{\ast}$.
  \item [(R3)] $i_\ast$, $j_!$ and $j_\ast$ are fully faithful.
  \item [(R4)] For each $X\in\mathcal{B}$, there exists a left exact $\mathbb{E}_\mathcal{B}$-triangle sequence
  \begin{equation}\label{first}
  \xymatrix{i_\ast i^! X\ar[r]^-{\theta_X}&X\ar[r]^-{\vartheta_X}&j_\ast j^\ast X\ar[r]&i_\ast A}
   \end{equation}
  with $A\in \mathcal{A}$, where $\theta_X$ and  $\vartheta_X$ are given by the adjunction morphisms.
  \item [(R5)] For each $X\in\mathcal{B}$, there exists a right exact $\mathbb{E}_\mathcal{B}$-triangle sequence
  \begin{equation}\label{second}
  \xymatrix{i_\ast\ar[r] A' &j_! j^\ast X\ar[r]^-{\upsilon_X}&X\ar[r]^-{\nu_X}&i_\ast i^\ast X&}
   \end{equation}
 with $A'\in \mathcal{A}$, where $\upsilon_X$ and $\nu_X$ are given by the adjunction morphisms.
\end{itemize}
\end{definition}
We collect some properties of a recollement of extriangulated categories, which will be used in the sequel.
\begin{lemma}\label{CY}\rm{\cite[Proposition 3.3]{WWZ}} Let ($\mathcal{A}$, $\mathcal{B}$, $\mathcal{C}$) be a recollement of extriangulated categories as \rm{(\ref{recolle})}.

$(1)$ All the natural transformations
$$i^{\ast}i_{\ast}\Rightarrow\Id_\mathcal{A},~\Id_\mathcal{A}\Rightarrow i^{!}i_{\ast},~\Id_\mathcal{C}\Rightarrow j^{\ast}j_{!},~j^{\ast}j_{\ast}\Rightarrow\Id_\mathcal{C}$$
are natural isomorphisms.

$(2)$ $i^{\ast}j_!=0$ and $i^{!}j_\ast=0$.

$(3)$ $i^{\ast}$ preserves projective objects and $i^{!}$ preserves injective objects.

$(3')$ $j_{!}$ preserves projective objects and $j_{\ast}$ preserves injective objects.

$(4)$ If $i^{!}$ (resp. $j_{\ast}$) is  exact, then $i_{\ast}$ (resp. $j^{\ast}$) preserves projective objects.

$(4')$ If $i^{\ast}$ (resp. $j_{!}$) is  exact, then $i_{\ast}$ (resp. $j^{\ast}$) preserves injective objects.

$(5)$ If $\mathcal{B}$ has enough projectives, then $\mathcal{A}$ has enough projectives; if $\mathcal{B}$ has enough injectives, then $\mathcal{A}$ has enough injectives.

$(6)$  If $\mathcal{B}$ has enough projectives and $j_{\ast}$ is exact, then $\mathcal{C}$ has enough projectives ; if $\mathcal{B}$ has enough injectives and $j_{!}$ is exact, then $\mathcal{C}$ has enough injectives.

$(7)$  If $i^{\ast}$ is exact, then $j_{!}$ is  exact.

$(7')$ If $i^{!}$ is exact, then $j_{\ast}$ is exact.

\end{lemma}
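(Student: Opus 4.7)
The plan is to establish each item via standard adjoint-functor techniques combined with the recollement axioms (R1)--(R5), translated into the extriangulated setting. For part (1), the key observation is the classical fact that if $F$ is fully faithful with left adjoint $L$ (resp.\ right adjoint $R$), then the counit $LF\Rightarrow\Id$ (resp.\ unit $\Id\Rightarrow RF$) is a natural isomorphism. Applying this with $F=i_*$, which is fully faithful by (R3), yields $i^*i_*\cong\Id_{\mathcal{A}}$ and $\Id_{\mathcal{A}}\cong i^!i_*$; the analogous isomorphisms for $j_!$ and $j_*$ follow identically. For (2), I would chase adjunctions: for any $C\in\mathcal{C}$, $A\in\mathcal{A}$,
$$\Hom_{\mathcal{A}}(i^*j_!C,A)\cong\Hom_{\mathcal{B}}(j_!C,i_*A)\cong\Hom_{\mathcal{C}}(C,j^*i_*A)=0$$
using $\Im i_*=\Ker j^*$ from (R2), and the Yoneda lemma then forces $i^*j_!=0$; the vanishing $i^!j_*=0$ is entirely dual.

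For (3)--(4'), I would invoke the standard principle that a left adjoint of an exact functor preserves projectives, and dually for right adjoints and injectives. Since $i_*$ is exact, its left adjoint $i^*$ preserves projectives, giving (3); (3') is analogous with $j_!\dashv j^*$; and (4), (4') follow by adding the relevant exactness hypothesis on whichever of $i^!,\,i^*,\,j_*,\,j_!$ is in question. For (5), given $A\in\mathcal{A}$, pick a deflation $P\to i_*A$ from some projective $P\in\mathcal{B}$; applying the exact functor $i^*$ yields an $\E$-triangle in $\mathcal{A}$ ending at $i^*i_*A\cong A$ by (1), with $i^*P$ projective by (3). The enough-injectives statement is dual. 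Item (6) follows the same pattern with $j^*$ replacing $i^*$: the extra exactness assumption on $j_*$ (resp.\ $j_!$) is precisely what is needed, via (4) (resp.\ (4')), to ensure $j^*$ sends a chosen projective (resp.\ injective) in $\mathcal{B}$ to one in $\mathcal{C}$.

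Finally, for (7) and (7'), I would exploit the sequences in (R4), (R5) together with the long exact sequence of $\E$-groups from Lemma \ref{022}. If $i^*$ is exact, then the right exact sequence (R5) can be spliced into a genuine $\E$-triangle, and comparing the long exact $\E$-sequences of a conflation in $\mathcal{C}$ with its image under $j_!$ shows $j_!$ sends conflations to conflations; (7') is dual. The main obstacle will be bookkeeping: one must carefully translate ``exactness'' in the extriangulated sense (preservation of $\E$-triangles) to the one-sided exactness asserted of the partial adjoints, and verify that each manipulation of triangles is legitimate under the (WIC) hypothesis. A secondary subtlety is that the usual abelian-category argument that a left adjoint of an exact functor preserves projectives must be reverified using deflations in place of epimorphisms, but this is routine given Lemma \ref{022} and the definitions of projectivity in an extriangulated category.
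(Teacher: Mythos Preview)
The paper does not actually prove this lemma: it is stated with the citation \cite[Proposition 3.3]{WWZ} and no proof is given. So there is no ``paper's own proof'' to compare against; the result is imported wholesale from Wang--Wei--Zhang.

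That said, your outline follows the standard route and is essentially the one taken in \cite{WWZ}. A couple of points deserve care if you want a self-contained argument. First, in (5) you write ``applying the exact functor $i^\ast$'', but $i^\ast$ is only \emph{right} exact by hypothesis; you need to check that the definition of right exactness in \cite{WWZ} (which involves compatibility with weak pushouts/deflations) is enough to produce an honest $\E_{\mathcal A}$-triangle ending at $i^\ast i_\ast A\cong A$, not merely a deflation. Second, your sketch for (7) and (7') is the least developed part: the idea of splicing (R5) into a genuine $\E$-triangle under the exactness of $i^\ast$ is right (this is exactly Lemma~\ref{1} in the paper), but the passage from there to ``$j_!$ sends conflations to conflations'' needs an explicit diagram chase rather than an appeal to long exact sequences alone; in \cite{WWZ} this is done by constructing the image conflation directly via the recollement data. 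None of this is a genuine gap---the strategy is sound---but the execution for (7)/(7') would require more than what you have written.
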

Let us end this section with the following key lemma.
\begin{lemma}\label{impor}\rm
Let $(\mathcal{A},\mathcal{B},\mathcal{C})$ be a recollement of extriangulated categories and $n$ any positive integer.
\begin{enumerate}
\item[{\rm (1)}] If $\mathcal{B}$ has enough projectives and $i^{\ast}$ is exact, then $\mathbb{E}_{\mathcal{A}}(i^{\ast}X,Y)\cong\mathbb{E}_{\mathcal{B}}(X,i_{\ast}Y)$ for any $X\in\mathcal{B}$ and $Y\in\mathcal{A}$.

\item[{\rm (2)}] If $\mathcal{A}$ has enough projectives and $i^{!}$ is  exact, then $\mathbb{E}_{\mathcal{B}}(i_{\ast}X,Y)\cong\mathbb{E}_{\mathcal{A}}(X,i^{!}Y)$ for any $Y\in\mathcal{B}$ and $X\in\mathcal{A}$.

\item[{\rm (3)}] If $\mathcal{C}$ has enough projectives and $j_{!}$ is  exact, then $\mathbb{E}_{\mathcal{B}}(j_{!}X,Y)\cong\mathbb{E}_{\mathcal{C}}(X,j^{\ast}Y)$ for any $Y\in\mathcal{B}$ and $X\in\mathcal{C}$.

\item[{\rm (4)}] If $\mathcal{B}$ has enough projectives and $j_{\ast}$ is  exact, then $\mathbb{E}_{\mathcal{C}}(j^{\ast}X,Y)\cong\mathbb{E}_{\mathcal{B}}(X,j_{\ast}Y)$ for any $Y\in\mathcal{C}$ and $X\in\mathcal{B}$.
\end{enumerate}

\end{lemma}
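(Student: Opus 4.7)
The plan is to prove all four isomorphisms by the same dimension-shifting argument. Each statement has the form $\mathbb{E}(FX,Y)\cong\mathbb{E}(X,GY)$ for one of the adjoint pairs $(F,G)$ appearing in (R1), where the Hom-level adjunction $\Hom(F-,-)\cong\Hom(-,G-)$ is already available. The idea is to resolve the first argument by a projective, push that conflation across to the other category using the relevant exact functor, invoke Lemma \ref{CY} to check that the chosen projective is preserved, and finally compare the two long exact sequences supplied by Lemma \ref{022}.

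I carry this out explicitly for (1); the remaining parts follow the same template with obvious modifications. Since $\mathcal{B}$ has enough projectives, I pick a conflation $K\to P\to X$ in $\mathcal{B}$ with $P\in\mathcal{P}_{\mathcal{B}}$. The hypothesis that $i^{\ast}$ is exact yields a conflation $i^{\ast}K\to i^{\ast}P\to i^{\ast}X$ in $\mathcal{A}$, and Lemma \ref{CY}(3) guarantees $i^{\ast}P\in\mathcal{P}_{\mathcal{A}}$. Applying $\Hom_{\mathcal{B}}(-,i_{\ast}Y)$ to the first conflation and $\Hom_{\mathcal{A}}(-,Y)$ to the second, the rightmost $\mathbb{E}$-terms vanish by projectivity of $P$ and $i^{\ast}P$, so Lemma \ref{022} presents both $\mathbb{E}_{\mathcal{B}}(X,i_{\ast}Y)$ and $\mathbb{E}_{\mathcal{A}}(i^{\ast}X,Y)$ as cokernels. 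The $(i^{\ast},i_{\ast})$-adjunction now supplies natural isomorphisms between the corresponding Hom-groups, assembling into a commutative diagram
\begin{equation*}
\xymatrix@C=0.5cm{
\Hom_{\mathcal{B}}(P,i_{\ast}Y)\ar[r]\ar[d]^{\cong} & \Hom_{\mathcal{B}}(K,i_{\ast}Y)\ar[r]\ar[d]^{\cong} & \mathbb{E}_{\mathcal{B}}(X,i_{\ast}Y)\ar[r]\ar[d] & 0\\
\Hom_{\mathcal{A}}(i^{\ast}P,Y)\ar[r] & \Hom_{\mathcal{A}}(i^{\ast}K,Y)\ar[r] & \mathbb{E}_{\mathcal{A}}(i^{\ast}X,Y)\ar[r] & 0
}
\end{equation*}
whose induced map on cokernels is the desired isomorphism.

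For (2), one resolves $X$ in $\mathcal{A}$ by a projective; exactness of $i^{!}$ forces $i_{\ast}$ to preserve projectives via Lemma \ref{CY}(4), and $i_{\ast}$ is itself exact as part of the data, so the pushed conflation lives in $\mathcal{B}$ and the $(i_{\ast},i^{!})$-adjunction finishes the argument. For (3), one resolves $X$ in $\mathcal{C}$; the exactness of $j_{!}$ combined with Lemma \ref{CY}(3') yields a projective in $\mathcal{B}$, and the $(j_{!},j^{\ast})$-adjunction closes the diagram. For (4), one resolves $X$ in $\mathcal{B}$; the functor $j^{\ast}$ is already exact by Definition \ref{recollement}, and exactness of $j_{\ast}$ combined with Lemma \ref{CY}(4) shows $j^{\ast}$ preserves projectives, after which the $(j^{\ast},j_{\ast})$-adjunction applies.

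The main obstacle here is not a genuinely categorical one but simply careful bookkeeping: in each case one must verify that the specific exactness hypothesis listed is exactly what is needed to transport a chosen projective across the adjoint pair in question. Once that matching is secured, the cokernel comparison above runs verbatim, and naturality of each adjunction ensures the resulting isomorphism is functorial in both $X$ and $Y$, as the subsequent sections will implicitly require.
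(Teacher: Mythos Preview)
Your argument is correct and is precisely the standard dimension-shifting proof one expects here; the paper itself omits the proof entirely, deferring to \cite[Proposition 2.8]{MXZ}, whose argument your proposal reproduces. One minor remark: the exact sequence you need passes from $\Hom$ to $\mathbb{E}$, which strictly speaking is the basic long exact sequence of \cite{NP} rather than Lemma~\ref{022} (which begins at $\mathbb{E}^k$ for $k\geq1$), but this is only a citation nicety and does not affect the validity of your argument.
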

\proof It is similar to \cite[Proposition 2.8]{MXZ}, we omit it.
\qed

\section{Glued $n$-cotorsion pairs}

\setcounter{equation}{0}

Let $\mathcal X$ be a class of objects in an extriangulated category $\mathcal C$.  For a nonnegative integer $m\geq 0$,
an $\mathcal X$-resolution of $C$ of length $m$ is a complex
$$X_m \to X_{m-1} \to \cdots \to X_1 \to X_0 \to C$$
where $X_k\in\mathcal X$ for any integer $0\leq k\leq m$. The above complex is determined by the following
$\E$-triangles:
$$\xymatrix{K_1\ar[r]&X_0\ar[r]&C\ar@{-->}[r]&}$$
$$\xymatrix{K_2\ar[r]&X_1\ar[r]&K_1\ar@{-->}[r]&}$$
$$\vdots$$
$$\xymatrix{K_{m-1}\ar[r]&X_{m-2}\ar[r]&K_{m-2}\ar@{-->}[r]&}$$
$$\xymatrix{X_m\ar[r]&X_{m-1}\ar[r]&K_{m-1}\ar@{-->}[r]&}$$
The \emph{resolution dimension of $C$ with respect to $\mathcal X$} (or the \emph{$\mathcal X$-resolution dimension} of $C$), denoted $\resdim_{\mathcal X}(C)$, is defined as the smallest nonnegative integer $m \geq 0$ such that $C$ has a $\mathcal X$-resolution of length $m$. If such $m$ does not exist, we set
$\resdim_{\mathcal X}(C) := \infty$.
Dually, we have the concepts of \emph{$\mathcal X$-coresolutions of $C$ of length $m$} and of  \emph{coresolution dimension of $C$ with respect to $\mathcal X$}, denoted by $\coresdim_{\mathcal X}(C)$.

Define
\begin{align*}
\mathcal X^\wedge_m & := \{ C \in \mathcal C~|~\resdim_{\mathcal X}(C) \leq m \}, \\
\mathcal X^\vee_m & := \{ C \in \mathcal C~|~\coresdim_{\mathcal X}(C) \leq m \}.
\end{align*}
In particular, $\mathcal X^\wedge_0=\mathcal X$ and $\mathcal X^\vee_0=\mathcal X$.
\medskip

We first recall the notion of left (resp. right) $n$-cotorsion pair from \cite{HZ}.
\begin{definition}\label{d1}{\cite[Definition 3.1]{HZ}}
Let $\mathcal C$ be an extriangulated category with enough projectives and enough injectives, and let $\mathcal X$ and $\mathcal Y$ be two classes of objects of $\mathcal C$. We call that $(\mathcal X,\mathcal Y)$ is a left $n$-cotorsion pair in $\mathcal C$ if the following conditions are satisfied:
\begin{enumerate}
\item[{\rm (1)}] $\mathcal X$ is closed under direct summands.

\item[{\rm (2)}] $\E^k(\mathcal X,\mathcal Y) = 0$~ for any $1\leq k \leq n$.

\item[{\rm (3)}] For any object $C \in \mathcal C$, there exists an $\E$-triangle
$$\xymatrix{K\ar[r]& X\ar[r]&C\ar@{-->}[r]&}$$
where $X\in \mathcal X$ and $K\in\mathcal Y^{\wedge}_{n-1}$.
\end{enumerate}
Dually, we can define a right $n$-cotorsion pair. If $(\mathcal X,\mathcal Y)$ is both a left and right $n$-cotorsion pair in $\mathcal C$, we call $(\mathcal X,\mathcal Y)$ an $n$-cotorsion pair in $\mathcal C$.
\end{definition}
Note that when $n=1$, an $n$-cotorsion pair is just
a cotorsion pair in the sense of Nakaoka-Palu, see \cite[Definition 4.1]{NP}.

Now we give some examples of $n$-cotorsion pairs. These examples come from \cite{HZ}.
\begin{example}
\begin{itemize}
\item [\rm(a)] Let $\mathcal C$ be an extriangulated category with enough projectives and enough injectives.
It is clear that  both $(\P,\mathcal C)$ and $(\mathcal C,\I)$ are $n$-cotorsion pair.

\item [\rm(b)] We denote by ``$\circ$" in the Auslander-Reiten quiver the indecomposable objects belong to a subcategory.
Let $\Lambda$ be the algebra given by the following quiver with relations\emph{:}
$$\xymatrix@C=0.7cm@R0.2cm{
&\\
-3 \ar[r] \ar@{.}@/^15pt/[rr] &-2 \ar@{.}@/_6pt/[drr]\ar[r] &-1 \ar@{.}@/^8pt/[drrr]\ar[dr] &&&&&&5 \ar[r] \ar@{.}@/^15pt/[rr] &6 \ar[r] &7\\
&&&0 \ar[r] \ar@{.}@/^18pt/[rrr] & 1 \ar[r] \ar@{.}@/^18pt/[rrr] &2 \ar[r] \ar@{.}@/^8pt/[urrr] & 3 \ar[r] \ar@{.}@/_8pt/[drr] &4 \ar[ur] \ar[dr] \ar@{.}@/_6pt/[urr] \ar@{.}@/^6pt/[drr] \\
&-5 \ar[r] \ar@{.}@/^6pt/[urr] &-4 \ar[ur] \ar@{.}@/_8pt/[urr] &&&&&&8 \ar[r] &9
}
$$
There exists a $3$-cluster tilting subcategory $\mathcal X$ of~~$\mathcal C={\rm mod}\Lambda$\emph{:}
$$\xymatrix@C=0.2cm@R0.2cm{
&&&\circ \ar[dr] &&&&\circ \ar[dr] &&\circ \ar[dr] &&\circ \ar[dr] &&\circ \ar[dr] &&\circ \ar[dr] &&&&\circ \ar[dr]\\
&\X: &\circ \ar@{.}[rr] \ar[ur] &&\cdot \ar[dr] \ar@{.}[rr] &&\cdot \ar[dr] \ar[ur] \ar@{.}[rr] &&\cdot \ar[dr] \ar[ur] \ar@{.}[rr] &&\cdot \ar[dr] \ar[ur] \ar@{.}[rr] &&\cdot \ar[dr] \ar[ur] \ar@{.}[rr] &&\cdot \ar[dr] \ar[ur] \ar@{.}[rr] &&\cdot \ar[dr] \ar@{.}[rr] &&\cdot  \ar[ur] \ar@{.}[rr] &&\circ\\
&&&&&\circ \ar[dr] \ar[ur] \ar@{.}[rr] &&\cdot \ar[ur] \ar@{.}[rr] &&\circ \ar[ur] \ar@{.}[rr] &&\cdot \ar[ur] \ar@{.}[rr] &&\circ \ar[ur] \ar@{.}[rr] &&\cdot \ar[dr] \ar[ur] \ar@{.}[rr] &&\circ \ar[dr] \ar[ur]\\
\circ \ar@{.}[rr] \ar[dr] &&\cdot \ar@{.}[rr] \ar[dr] &&\cdot \ar[ur] \ar@{.}[rr] &&\circ &&&&&&&&&&\circ \ar[ur] \ar@{.}[rr] &&\cdot \ar[dr] \ar@{.}[rr] &&\cdot \ar[dr] \ar@{.}[rr] &&\circ\\
&\circ \ar[ur] &&\circ \ar[ur] &&&&&&&&&&&&&&&&\circ \ar[ur] && \circ \ar[ur]
}
$$
By  \cite[Theorem 3.5]{HZ}, we have that $(\mathcal X,\mathcal X)$ is a $2$-cotorsion pair in $\mathcal C$.

\item [\rm(c)]Let $\Lambda$ be a finite-dimensional algebra of global dimension at most $n$.
We denote the Serre functor of $D^b({\rm mod}\Lambda)$
by $\mathbb{S}$, where $D^b({\rm mod}\Lambda)$ is the bounded derived category of ${\rm mod}\Lambda$.
If $\Lambda$ is $n$-representation finite, that is to say, the
module category ${\rm mod}\Lambda$ has an $n$-cluster tilting object, by \cite[Theorem 1.23]{I},
we obtain that the subcategory $$\mathcal X:={\rm add}\{\mathbb{S}^k\Lambda[-nk]~|~k\in\mathbb{Z}\}$$
of $D^b({\rm mod}\Lambda)$ is $n$-cluster tilting. By \cite[Theorem 3.5]{HZ},
 we have that $(\mathcal X,\mathcal X)$ is an $(n-1)$-cotorsion pair in $\mathcal C$.
\end{itemize}
\end{example}

\begin{definition}\label{her}
A left or right $n$-cotorsion pair $(\mathcal X,\mathcal Y)$ in $\mathcal C$ is hereditary if $\E^{n+1}(\mathcal X,\mathcal Y) = 0$.
\end{definition}

The following useful result can be found in \cite{WWZ}.

\begin{lemma}\label{1}\rm{\cite[Proposition 3.4]{WWZ}}
Let ($\mathcal{A}$, $\mathcal{B}$, $\mathcal{C}$) be a recollement of extriangulated categories and $X\in\mathcal{B}$. Then the following statements hold.

$(1)$ If $i^{!}$ is exact, there exists an $\mathbb{E}_\mathcal{B}$-triangle
  \begin{equation*}\label{third}
  \xymatrix{i_\ast i^! X\ar[r]^-{\theta_X}&X\ar[r]^-{\vartheta_X}&j_\ast j^\ast X\ar@{-->}[r]&}
   \end{equation*}
 where $\theta_X$ and  $\vartheta_X$ are given by the adjunction morphisms.

$(2)$ If $i^{\ast}$ is exact, there exists an $\mathbb{E}_\mathcal{B}$-triangle
  \begin{equation*}\label{four}
  \xymatrix{ j_! j^\ast X\ar[r]^-{\upsilon_X}&X\ar[r]^-{\nu_X}&i_\ast i^\ast X \ar@{-->}[r]&}
   \end{equation*}
where $\upsilon_X$ and $\nu_X$ are given by the adjunction morphisms.
\end{lemma}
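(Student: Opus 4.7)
The plan is to derive both $\mathbb{E}_{\mathcal{B}}$-triangles directly from the axioms (R4) and (R5) of the recollement, by exploiting the extra exactness hypotheses together with Lemma \ref{CY}. For part (1), I would start from the left exact $\mathbb{E}_{\mathcal{B}}$-triangle sequence in (\ref{first}) guaranteed by (R4),
$$i_\ast i^! X \xrightarrow{\theta_X} X \xrightarrow{\vartheta_X} j_\ast j^\ast X \longrightarrow i_\ast A, \qquad A\in\mathcal{A}.$$
By definition this decomposes, via some intermediate object $K$, into two genuine $\mathbb{E}_{\mathcal{B}}$-triangles
$$i_\ast i^! X \xrightarrow{\theta_X} X \xrightarrow{p} K \dashrightarrow, \qquad K \xrightarrow{q} j_\ast j^\ast X \longrightarrow i_\ast A \dashrightarrow,$$
with $\vartheta_X = q\circ p$. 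The strategy is to show $A\cong 0$; then the second $\mathbb{E}$-triangle collapses to an isomorphism $K\cong j_\ast j^\ast X$, so the first $\mathbb{E}$-triangle becomes exactly the one claimed.

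To prove $A=0$, I would apply the exact functor $i^!$ to the second $\mathbb{E}_{\mathcal{B}}$-triangle. By Lemma \ref{CY}(2) we have $i^! j_\ast j^\ast X = 0$, and by Lemma \ref{CY}(1) we have $i^! i_\ast A \cong A$. The resulting $\mathbb{E}_{\mathcal{A}}$-triangle has the form
$$i^! K \longrightarrow 0 \longrightarrow A \dashrightarrow \delta,$$
with $\delta \in \mathbb{E}_{\mathcal{A}}(A,0)=0$ by additivity of $\mathbb{E}_{\mathcal{A}}$. Hence the triangle is split, which forces $A\cong 0\oplus A\cong 0$, so $i_\ast A=0$. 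Substituting this back into the first $\mathbb{E}$-triangle yields the desired $\mathbb{E}_{\mathcal{B}}$-triangle. Part (2) follows by a strictly dual argument, starting from the right exact $\mathbb{E}_{\mathcal{B}}$-triangle sequence in (\ref{second}) provided by (R5), applying the exact functor $i^\ast$ to the analogous intermediate triangle, and invoking $i^\ast j_! = 0$ together with $i^\ast i_\ast \cong \Id$ from Lemma \ref{CY}.

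The reasoning is essentially routine once the definitions of a left (resp. right) exact $\mathbb{E}$-triangle sequence and an exact functor between extriangulated categories are unpacked. The only delicate point I anticipate is bookkeeping: one must verify that the decomposition of the left exact sequence (\ref{first}) produces the very morphisms $\theta_X$ and $\vartheta_X$ in the factorization $\vartheta_X = q\circ p$, rather than merely morphisms equivalent to them. This is guaranteed by the naturality of the units and counits of the adjunctions $(i^\ast,i_\ast,i^!)$ and $(j_!,j^\ast,j_\ast)$ through which $\theta_X$ and $\vartheta_X$ are constructed, and introduces no new ideas.
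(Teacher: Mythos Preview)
The paper does not give its own proof of this lemma; it is cited from \cite[Proposition~3.4]{WWZ} without argument. Your overall strategy---decompose the left exact sequence from (R4) into two $\mathbb{E}_{\mathcal{B}}$-triangles and show the tail $i_\ast A$ vanishes by applying the exact functor $i^{!}$---is natural and sound in outline.

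There is, however, a genuine gap. After applying $i^{!}$ to the second triangle $K\to j_\ast j^\ast X\to i_\ast A\dashrightarrow$, you obtain
\[
i^{!} K \longrightarrow 0 \longrightarrow A \overset{\delta}{\dashrightarrow},
\]
but here $\delta\in\mathbb{E}_{\mathcal{A}}(A,\,i^{!}K)$, \emph{not} $\mathbb{E}_{\mathcal{A}}(A,0)$: for a conflation $X\to Y\to Z$ the extension class lies in $\mathbb{E}(Z,X)$, and vanishing of the middle term $Y$ does not relocate it to $\mathbb{E}(Z,0)$. An $\mathbb{E}$-triangle of the shape $X\to 0\to Y$ need not split in general, so from this step alone you cannot conclude $A\cong 0$.

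The repair is short: apply $i^{!}$ first to the \emph{other} triangle $i_\ast i^{!} X \xrightarrow{\theta_X} X \to K\dashrightarrow$. Since $i_\ast$ is fully faithful, the unit $\Id_{\mathcal A}\Rightarrow i^{!} i_\ast$ is invertible (Lemma~\ref{CY}(1)), and the triangle identity for the adjunction $(i_\ast,i^{!})$ then forces $i^{!}\theta_X$ to be an isomorphism. An $\mathbb{E}$-triangle whose inflation is an isomorphism has zero third term (use the long exact $\mathrm{Hom}$--$\mathbb{E}$ sequence), so $i^{!}K=0$. With this in hand your claim $\delta\in\mathbb{E}_{\mathcal{A}}(A,0)=0$ becomes legitimate, the triangle splits, $A=0$, and the remainder of the argument (and its dual for part~(2)) goes through unchanged.
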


Our first main result is the following.

\begin{theorem}\label{main}
 Let $\mathcal A,\mathcal B,\mathcal C$ be three extriangulated categories with enough projectives and enough injectives. Assume that
 $\mathcal B$ admits a recollement relative to $\mathcal A$ and $\mathcal C$ as follows
$$\xymatrix{\mathcal{A}\ar[rr]|{i_{*}}&&\ar@/_1pc/[ll]|{i^{*}}\ar@/^1pc/[ll]|{i^{!}}\mathcal{B}
\ar[rr]|{j^{\ast}}&&\ar@/_1pc/[ll]|{j_{!}}\ar@/^1pc/[ll]|{j_{\ast}}\mathcal{C}}$$
 If $(\mathcal{T}_{1},\mathcal{F}_{1})$ and $(\mathcal{T}_{2},\mathcal{F}_{2})$ are $n$-cotorsion pairs in $\mathcal{A}$ and $\mathcal{C}$, respectively. Define
$$\mathcal{T}:=\{B\in \mathcal{B}~|~i^{\ast }B\in\mathcal{T}_{1}~\text{and}~j^{\ast}B\in \mathcal{T}_{2}  \}
~~\mbox{and}~~
\mathcal{F}=:\{B\in \mathcal{B}~|~i^{!}B\in\mathcal{F}_{1}~\text{and}~j^{\ast}B\in \mathcal{F}_{2}  \}.$$
Then the following statements hold, if both $i^{!}$ and $i{^{\ast}}$ are exact.
\begin{itemize}

\item[\rm (1)]$(\mathcal{T}_{1}, \mathcal{F}_{1})=(i^{\ast }\mathcal{T}, i^{!}\mathcal{F})$ and $(\mathcal{T}_{2}, \mathcal{F}_{2})=(j^{\ast }\mathcal{T}, j^{\ast }\mathcal{F})$;

\item[\rm (2)] $\mathcal{T}^\vee_{n-1}=\{B\in \mathcal{B}~|~i^{\ast }B\in{\mathcal{T}_1}^\vee_{n-1}~\text{and}~j^{\ast}B\in {\mathcal{T}_2}^\vee_{n-1}  \}$,

    $\mathcal{F}^\wedge_{n-1}=\{B\in \mathcal{B}~|~i^{!}B\in{\mathcal{F}_1}^\wedge_{n-1}~\text{and}~j^{\ast}B\in {\mathcal{F}_2}^\wedge_{n-1}  \}$;

\item[\rm (3)] \label{3} $(\mathcal{T},\mathcal{F})$ is an $n$-cotorsion pair in $\mathcal{B}$.
 We call $(\mathcal{T},\mathcal{F})$  is ``glued" by $(\mathcal T_1,\mathcal F_1)$
and $(\mathcal T_2,\mathcal F_2)$.

\item[\rm (4)] If $(\mathcal{T}_{1},\mathcal{F}_{1})$ and $(\mathcal{T}_{2},\mathcal{F}_{2})$ are hereditary $n$-cotorsion pairs in $\mathcal{A}$ and $\mathcal{C}$, respectively, then $(\mathcal{T},\mathcal{F})$ is a hereditary $n$-cotorsion pair in $\mathcal{B}$.

\end{itemize}
\end{theorem}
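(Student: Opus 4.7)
The plan is to treat the four parts in order, reducing each to manipulations with the adjoint quadruple and the recollement triangles from Lemma \ref{1}, both of which are available because $i^{*}$ and $i^{!}$ are exact (and hence so are $j_{!}$ and $j_{\ast}$ by Lemma \ref{CY}\,(7) and (7$'$)). For Part (1), I would show $\mathcal{T}_{1}=i^{\ast}\mathcal{T}$ by observing that $i^{\ast}B\in\mathcal{T}_{1}$ for $B\in\mathcal{T}$ is definitional, while for any $T_{1}\in\mathcal{T}_{1}$ the unit isomorphism $i^{\ast}i_{\ast}\cong\Id_{\mathcal{A}}$ of Lemma \ref{CY}\,(1) identifies $T_{1}\cong i^{\ast}(i_{\ast}T_{1})$, with $i_{\ast}T_{1}\in\mathcal{T}$ because $j^{\ast}i_{\ast}=0$ sends it to $0\in\mathcal{T}_{2}$. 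The other three equalities follow the same template using $j^{\ast}j_{!}\cong\Id_{\mathcal{C}}$, $j^{\ast}j_{\ast}\cong\Id_{\mathcal{C}}$, $i^{\ast}j_{!}=0$ and $i^{!}j_{\ast}=0$.

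For Part (2), exactness of $i^{\ast}$ and $j^{\ast}$ lets me push any $\mathcal{T}$-coresolution of $B$ termwise into $\mathcal{T}_{1}$- and $\mathcal{T}_{2}$-coresolutions of $i^{\ast}B$ and $j^{\ast}B$ of the same length. For the converse I would induct on the coresolution dimension: given $i^{\ast}B\in(\mathcal{T}_{1})^{\vee}_{m}$ and $j^{\ast}B\in(\mathcal{T}_{2})^{\vee}_{m}$, the base case $m=0$ is supplied by Part (1), and the inductive step would lift the first cosyzygies from $\mathcal{A}$ and $\mathcal{C}$ via $i_{\ast}$ and $j_{\ast}$ and glue them along the triangle $i_{\ast}i^{!}B\to B\to j_{\ast}j^{\ast}B$ of Lemma \ref{1}\,(1), showing that the resulting cosyzygy in $\mathcal{B}$ has images of strictly smaller coresolution dimension under $i^{\ast}$ and $j^{\ast}$. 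The $\wedge$-statement for $\mathcal{F}$ is dual.

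Part (3) is the heart of the argument. Closure of $\mathcal{T}$ and $\mathcal{F}$ under summands is inherited from that of $\mathcal{T}_{1},\mathcal{T}_{2},\mathcal{F}_{1},\mathcal{F}_{2}$ since $i^{\ast},i^{!},j^{\ast}$ respect summands. For the orthogonality $\E^{k}(\mathcal{T},\mathcal{F})=0$ with $1\le k\le n$, I would apply $\E^{k}_{\mathcal{B}}(-,F)$ to the triangle $j_{!}j^{\ast}T\to T\to i_{\ast}i^{\ast}T$ of Lemma \ref{1}\,(2); the two end terms become $\E^{k}_{\mathcal{C}}(j^{\ast}T,j^{\ast}F)$ and $\E^{k}_{\mathcal{A}}(i^{\ast}T,i^{!}F)$ by Lemma \ref{impor}\,(3) and (2), both of which vanish by hypothesis. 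To produce the left-$n$-approximation $\E$-triangle $K\to T\to B$ for an arbitrary $B\in\mathcal{B}$, I would take the approximations $K_{1}\to T_{1}\to i^{\ast}B$ in $\mathcal{A}$ and $K_{2}\to T_{2}\to j^{\ast}B$ in $\mathcal{C}$, apply the exact functors $i_{\ast}$ and $j_{!}$, and splice them against the triangle $j_{!}j^{\ast}B\to B\to i_{\ast}i^{\ast}B$ by iterated pullback together with (ET4)$^{\mathrm{op}}$, building $T$ so that $i^{\ast}T\cong T_{1}$ and $j^{\ast}T\cong T_{2}$; the fibre $K$ sits in a conflation assembled from $i_{\ast}K_{1}$ and $j_{!}K_{2}$, and Part (2) then certifies $K\in\mathcal{F}^{\wedge}_{n-1}$. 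The right-$n$-cotorsion half is dual, using Lemma \ref{1}\,(1) and the $\mathcal{F}$-side approximations.

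The main obstacle will be precisely this two-sided gluing: I must check that the object produced by the octahedral construction really has the correct images under both $i^{\ast}$ (or $i^{!}$) and $j^{\ast}$ simultaneously, and that the intermediate conflations furnished by (ET4)/(ET4)$^{\mathrm{op}}$ genuinely qualify as $\E$-triangles. Once this gluing is in place, Part (4) is essentially free: the same Lemma \ref{impor} isomorphisms, applied to the same recollement triangle, propagate the vanishing $\E^{n+1}_{\mathcal{A}}(\mathcal{T}_{1},\mathcal{F}_{1})=0$ and $\E^{n+1}_{\mathcal{C}}(\mathcal{T}_{2},\mathcal{F}_{2})=0$ up to $\E^{n+1}_{\mathcal{B}}(\mathcal{T},\mathcal{F})=0$, giving heredity of $(\mathcal{T},\mathcal{F})$.
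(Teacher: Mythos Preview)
Your overall architecture is right, and Parts (1), (4), and the orthogonality half of (3) match the paper's argument essentially verbatim.  The gap is in your choice of \emph{which} recollement triangle and which lift functor to use in Parts (2) and (3); the choices you make run into the non-vanishing of $i^{!}j_{!}$ and $i^{\ast}j_{\ast}$.

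In Part (2) you propose to glue along $i_{\ast}i^{!}B\to B\to j_{\ast}j^{\ast}B$, but the hypothesis gives you a $\mathcal{T}_{1}$-coresolution of $i^{\ast}B$, not of $i^{!}B$, so there is no map from $i_{\ast}i^{!}B$ into $i_{\ast}(\text{first term})$ to begin the $3{\times}3$ construction.  The paper instead uses the triangle $j_{!}j^{\ast}B\to B\to i_{\ast}i^{\ast}B$ of Lemma \ref{1}(2) and lifts via $j_{!}$ and $i_{\ast}$; the point is that $i^{\ast}j_{!}=0$, so the glued middle term $j_{!}Z_{0}\oplus i_{\ast}Y_{0}$ lands in $\mathcal{T}$ under both $i^{\ast}$ and $j^{\ast}$.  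In Part (3), for the left-approximation triangle you lift the $\mathcal{C}$-side via $j_{!}$; then your fibre $K$ sits in a conflation built from $i_{\ast}K_{1}$ and $j_{!}K_{2}$, and to invoke Part (2) you must show $i^{!}K\in(\mathcal{F}_{1})^{\wedge}_{n-1}$.  But applying $i^{!}$ yields a triangle involving $i^{!}j_{!}K_{2}$, which need not vanish even when $i^{\ast}$ and $i^{!}$ are both exact.  The paper avoids this by a different, sequential construction: first pull back $j_{\ast}(K\to A\to j^{\ast}B)$ along the unit $B\to j_{\ast}j^{\ast}B$ to get $j_{\ast}K\to M\to B$ with $j^{\ast}M\cong A\in\mathcal{T}_{2}$; then pull back $i_{\ast}(L\to C\to i^{\ast}M)$ along $M\to i_{\ast}i^{\ast}M$ to get $i_{\ast}L\to N\to M$ with $N\in\mathcal{T}$; finally $({\rm ET4})^{\rm op}$ produces $W\to N\to B$ with $W$ in a triangle $i_{\ast}L\to W\to j_{\ast}K$.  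Now the two \emph{available} vanishings $j^{\ast}i_{\ast}=0$ and $i^{!}j_{\ast}=0$ give $j^{\ast}W\cong K$ and $i^{!}W\cong L$, whence $W\in\mathcal{F}^{\wedge}_{n-1}$ by Part (2).  The moral is that only two of the four composites $i^{\ast}j_{!},\,i^{!}j_{\ast},\,i^{!}j_{!},\,i^{\ast}j_{\ast}$ vanish, and the order and choice of lifts must be arranged so that exactly those two are the ones needed.
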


\proof
(1) It is obvious that $i^{\ast}\mathcal{T}\subseteq\mathcal{T}_{1}.$ On the other hand, for any ${T}_{1}\in\mathcal{T}_{1},$ since $$i^{\ast}i_{\ast}{T}_{1}\cong{T}_{1}\in\mathcal{T}_{1},~~~ j^{\ast}i_{\ast}{T}_{1}=0\in\mathcal{T}_{2},$$
we have $\mathcal {T}_{1}\subseteq i^{\ast}\mathcal{T}.$
Similarly, we obtain $\mathcal{F}_{1}=i^{!}\mathcal{F},$   $\mathcal{T}_{2}=j^{\ast }\mathcal{T},  \mathcal{F}_{2}=j^{\ast }\mathcal{F}.$

(2) For any $B\in \mathcal{T}^\vee_{n-1}$, we have a complex as follows
$$\xymatrix@C=0.3cm{
 B \ar[rr]^{}&& X_0\ar[rr]^{}\ar[dr]_{}&& X_1 \ar[rr]^{}\ar[dr]_{}&&\cdots\ar[rr]^{}\ar[dr]_{} &&X_{n-3 }\ar[dr]_{}\ar[rr]^{}&& X_{n-2 }\ar[rr]^{} &&X_{n-1 } \\
                &&&K_1\ar[ru]^{} && K_2 \ar[ru]^{} &&K_{n-3 }\ar[ru]^{}&&K_{n-2 }\ar[ru]^{} &&}
$$
where $$\xymatrix{B\ar[r]^{}& X_0\ar[r]^{}&K_1\ar@{-->}[r]^{}&},$$ $$\xymatrix{K_i\ar[r]^{}& X_i\ar[r]^{}&K_{i+1}\ar@{-->}[r]^{}&},~~~1\leq i\leq n-3$$
$$\xymatrix{K_{n-2 }\ar[r]^{}& X_{n-2 }\ar[r]^{}&X_{n-1 }\ar@{-->}[r]^{}&}$$ are $\E_\mathcal B$-triangles, $X_0,X_1,\cdots,X_{n-1 }\in \mathcal{T}$. Since $j^{\ast }$ is exact, we obtain a complex
$$\xymatrix@C=0.3cm{
 j^{\ast }B \ar[rr]^{}&& j^{\ast }X_0\ar[rr]^{}\ar[dr]_{}&& j^{\ast }X_1 \ar[rr]^{}\ar[dr]_{}&&\cdots\ar[rr]^{}\ar[dr]_{} && j^{\ast }X_{n-2 }\ar[rr]^{} &&j^{\ast }X_{n-1 } \\
                &&&j^{\ast }K_1\ar[ru]^{} && j^{\ast }K_2 \ar[ru]^{} &&j^{\ast }K_{n-2 }\ar[ru]^{}&&}
$$
Note that $j^{\ast }X_i\in j^{\ast }\mathcal{T}=\mathcal{T}_{2}$, $i=0,1,\cdots,n-1$, so we have $j^{\ast}B\in {\mathcal{T}_2}^\vee_{n-1}$. Since $i^{\ast }$ is exact, there is a complex
$$\xymatrix@C=0.3cm{
 i^{\ast }B \ar[rr]^{}&& i^{\ast }X_0\ar[rr]^{}\ar[dr]_{}&& i^{\ast }X_1 \ar[rr]^{}\ar[dr]_{}&&\cdots\ar[rr]^{}\ar[dr]_{} && i^{\ast }X_{n-2 }\ar[rr]^{} &&i^{\ast }X_{n-1 } \\
                &&&i^{\ast }K_1\ar[ru]^{} && i^{\ast }K_2 \ar[ru]^{} &&i^{\ast }K_{n-2 }\ar[ru]^{}&&}
$$
Note that $i^{\ast }X_i\in i^{\ast }\mathcal{T}=\mathcal{T}_{1}$, $i=0,1,\cdots,n-1$, so we have $i^{\ast}B\in {\mathcal{T}_1}^\vee_{n-1}$.

Conversely, let $B\in\mathcal{B}$ satisfy $i^{\ast}B\in {\mathcal{T}_1}^\vee_{n-1}$ and $j^{\ast}B\in {\mathcal{T}_2}^\vee_{n-1}$. Then there exist the following two complexes
\begin{equation}\xymatrix@C=0.3cm{
 i^{\ast }B \ar[rr]^{}&& Y_0\ar[rr]^{}\ar[dr]_{}&& Y_1 \ar[rr]^{}\ar[dr]_{}&&\cdots\ar[rr]^{}\ar[dr]_{} && Y_{n-2 }\ar[rr]^{} &&Y_{n-1 } \\
                &&&K'_1\ar[ru]^{} && K'_2 \ar[ru]^{} &&K'_{n-2 }\ar[ru]^{}&&}
\end{equation}
and
\begin{equation}\xymatrix@C=0.3cm{
j^{\ast }B \ar[rr]^{}&& Z_0\ar[rr]^{}\ar[dr]_{}&& Z_1 \ar[rr]^{}\ar[dr]_{}&&\cdots\ar[rr]^{}\ar[dr]_{} && Z_{n-2 }\ar[rr]^{} &&Z_{n-1 } \\
                &&&K''_1\ar[ru]^{} &&K''_2 \ar[ru]^{} &&K''_{n-2 }\ar[ru]^{}&&}
\end{equation}
where $Y_i\in \mathcal{T}_1$ and $Z_i\in \mathcal{T}_2$, $i=0,1,\cdots,n-1$. Since $i_{\ast }$ and  $j_{! }$ are exact, we have the following two complexes
\begin{equation}\xymatrix@C=0.3cm{
 i_{\ast }i^{\ast }B \ar[rr]^{}&& i_{\ast }Y_0\ar[rr]^{}\ar[dr]_{}&& i_{\ast }Y_1 \ar[rr]^{}\ar[dr]_{}&&\cdots\ar[rr]^{}\ar[dr]_{} && i_{\ast }Y_{n-2 }\ar[rr]^{} &&i_{\ast }Y_{n-1 } \\
                &&&i_{\ast }K'_1\ar[ru]^{} &&i_{\ast } K'_2 \ar[ru]^{} &&i_{\ast }K'_{n-2 }\ar[ru]^{}&&}
\end{equation}
and
\begin{equation}\xymatrix@C=0.3cm{
j_{! }j^{\ast }B \ar[rr]^{}&& j_{! }Z_0\ar[rr]^{}\ar[dr]_{}&& j_{! }Z_1 \ar[rr]^{}\ar[dr]_{}&&\cdots\ar[rr]^{}\ar[dr]_{} && j_{! }Z_{n-2 }\ar[rr]^{} &&j_{! }Z_{n-1 } \\
                &&&j_{! }K''_1\ar[ru]^{} &&j_{! }K''_2 \ar[ru]^{} &&j_{! }K''_{n-2 }\ar[ru]^{}&&}
\end{equation}
where $i_{\ast }Y_i\in \mathcal{T}$ and $j_{! }Z_i\in \mathcal{T}$, $i=0,1,\cdots,n-1$. Since $i^{\ast }$ is exact, there exists an $\mathbb{E}_\mathcal{B}$-triangle by Lemma \ref{1}
  $$
  \xymatrix{ j_! j^\ast B\ar[r]^-{}&B\ar[r]^-{}&i_\ast i^\ast B \ar@{-->}[r]&}
  $$
Since $i^\ast$ and $i^!$ are exact, by Lemma \ref{impor}, we have $\mathbb{E}_{\mathcal{B}}(i_\ast i^\ast B, j_{! }Z_0)\cong\mathbb{E}_{\mathcal{A}}(i^\ast B, i^{! }j_{! }Z_0)=0$. Thus there is a commutative diagram of $\E_\mathcal B$-triangles
$$\xymatrix{j_! j^\ast B\ar[r]^{}\ar[d]^{}&B\ar[r]^{}\ar[d]^{}&i_\ast i^\ast B\ar[d]^{}\ar@{-->}[r]&\\
j_{! }Z_0\ar[d]^{}\ar[r]^{}&j_{! }Z_0\oplus i_{\ast }Y_0\ar[d]^{}\ar[r]^{}&i_{\ast }Y_0\ar[d]^{}\ar@{-->}[r]&\\
j_{! }K''_1\ar[r]^{}\ar@{-->}[d]&K_1\ar@{-->}[d]\ar[r]^{}&i_{\ast }K'_1\ar@{-->}[d]\ar@{-->}[r]&\\
&&&}$$
Repeat the process for $$
  \xymatrix{ j_{! }K''\ar[r]^-{}&K_1\ar[r]^-{}&i_{\ast }K'_1\ar@{-->}[r]&},
  $$
we end up with $B\in \mathcal{T}^\vee_{n-1}$. One can similarly prove that  $$\mathcal{F}^\wedge_{n-1}=\{B\in \mathcal{B}~|~i^{!}B\in{\mathcal{F}_1}^\wedge_{n-1}~\text{and}~j^{\ast}B\in {\mathcal{F}_2}^\wedge_{n-1}  \}$$.

(3) We claim that $T\in\mathcal{T}$ if and only if $\E_\mathcal{B}^k(T,\mathcal{F}) = 0$, $1\leq k\leq n$. Since $i^{\ast }$ is exact, there exists an $\mathbb{E}_\mathcal{B}$-triangle by Lemma \ref{1}
  $$
  \xymatrix{ j_! j^\ast T\ar[r]^-{}&T\ar[r]^-{}&i_\ast i^\ast T \ar@{-->}[r]&.}
  $$
So we obtain the following exact sequence  Lemma \ref{022}
$$
  \xymatrix{ \E_\mathcal{B}^k(i_\ast i^\ast T, F)\ar[r]^-{}&\E_\mathcal{B}^k( T, F)\ar[r]^-{}&\E_\mathcal{B}^k(j_!j^\ast T, F)&}
  $$
for any $F\in\mathcal{F}$. Note that $i^!$ and  $j_!$ are exact, by Lemma \ref{impor}, we have $ \E_\mathcal{B}^k(i_\ast i^\ast T, F)\cong \E_\mathcal{A}^k( i^\ast T, i^! F)=0$ and $ \E_\mathcal{B}^k(j_!j^\ast T, F)\cong \E_\mathcal{C}^k( j^\ast T, j^\ast F)=0$, $1\leq k\leq n$. So we get $\E_\mathcal{B}^k( T, F)=0$ for $1\leq k\leq n$. Conversely, suppose
$\E_\mathcal{B}^k(T,\mathcal{F}) = 0$, $1\leq k\leq n$. For any $F_1\in\mathcal{F}_1$ and $F_2\in\mathcal{F}_2$, since $i_\ast F_1\in\mathcal{F}$ and $j_\ast F_2\in\mathcal{F}$, by Lemma \ref{impor}, we have $\E_\mathcal{A}^k(i^\ast T, F_1)\cong \E_\mathcal{B}^k(  T, i_\ast F_1)=0$ and $\E_\mathcal{C}^k(j^\ast T, F_2)\cong \E_\mathcal{B}^k(  T, j_\ast F_2)=0$. This shows $i^\ast T\in T_1$ and $j^\ast T\in T_2$. That is to say $T\in \mathcal{T}$. Using a similar argument, one can prove that $F\in\mathcal{F}$ if and only if $\E_\mathcal{B}^k(\mathcal{T},F) = 0$, $1\leq k\leq n$.

For any $B\in\mathcal B$, since $(\mathcal{T}_{2},\mathcal{F}_{2})$ is an $n$-cotorsion pair in $\mathcal{C}$, there exists an $\mathbb{E}_\mathcal{C}$-triangle
 $$
  \xymatrix{K\ar[r]^-{}&A\ar[r]^-{}&j^\ast B \ar@{-->}[r]&},
  $$
where $A\in\mathcal{T}_{2}$ and $K\in{\mathcal{F}_2}^\wedge_{n-1} $. Since $j_\ast$ is exact, we obtain an $\mathbb{E}_\mathcal{B}$-triangle
 $$
  \xymatrix{j_\ast K\ar[r]^-{}&j_\ast A\ar[r]^-{}&j_\ast j^\ast B \ar@{-->}[r]&}.
  $$
Let $\vartheta_B: B\rightarrow j_\ast j^\ast B$ be the adjunction morphism, by the dual of \cite[Proposition 1.20]{LN}, we have the following commutative diagram
\begin{equation}\label{t0}
\begin{array}{l}
\xymatrix{
j_\ast K \ar[r]\ar@{=}[d]& M\ar[r]^{} \ar[d]^{} &B\ar@{-->}[r]^{}\ar[d]^{\vartheta_B}&\\
j_\ast K\ar[r]^{} & j_\ast A \ar[r]^{} & j_\ast j^\ast B\ar@{-->}[r]^{} &}
\end{array}
\end{equation}
of $\mathbb{E}_\mathcal{B}$-triangles. This show that $j^\ast M\cong j^\ast j_\ast A\cong A\in\mathcal{T}_{2}$. Since $i^\ast M\in\mathcal{A}$ and $(\mathcal{T}_{1},\mathcal{F}_{1})$ is an $n$-cotorsion pair in $\mathcal{A}$, there exists an $\mathbb{E}_\mathcal{A}$-triangle
 $$
  \xymatrix{L\ar[r]^-{}&C\ar[r]^-{}&i^\ast M \ar@{-->}[r]&},
  $$
where $C\in\mathcal{T}_{1}$ and $L\in{\mathcal{F}_1}^\wedge_{n-1} $. We have an $\mathbb{E}_\mathcal{B}$-triangle since $i_\ast$ is exact
 $$
  \xymatrix{i_\ast L\ar[r]^-{}&i_\ast C\ar[r]^-{}&i_\ast i^\ast M \ar@{-->}[r]&}.
  $$
Let $\vartheta_M: M\rightarrow i_\ast i^\ast M$ be the adjunction morphism, by the dual of \cite[Proposition 1.20]{LN},  we have the following commutative diagram
\begin{equation}\label{t1}
\begin{array}{l}\xymatrix{
i_\ast L\ar[r]\ar@{=}[d]& N\ar[r]^{} \ar[d]^{} &M\ar@{-->}[r]^{}\ar[d]^{\vartheta_M}&\\
i_\ast L\ar[r]^{} & i_\ast C \ar[r]^{} & i_\ast i^\ast M\ar@{-->}[r]^{} &}
\end{array}
\end{equation}
of $\mathbb{E}_\mathcal{B}$-triangles. This show that $i^\ast N\cong i^\ast i_\ast C\cong C\in\mathcal{T}_{1}$. Applying $j^\ast$ to the first row of (\ref{t1}), we have $j^\ast N\cong j^\ast M\cong A\in \mathcal{T}_{2}$. So $N\in\mathcal{T}$. Considering the first rows of (\ref{t0}) and (\ref{t1}), by $\rm (ET4)^{op}$, we have a commutative diagram of  $\E_\mathcal{B}$-triangles
$$\xymatrix{i_\ast L\ar[r]^{}\ar@{=}[d]&W\ar[r]^{}\ar[d]^{}&j_\ast K\ar[d]\ar@{-->}[r]&\\
i_\ast L\ar[r]^{}&N\ar[d]^{}\ar[r]^{}&M\ar[d]^{}\ar@{-->}[r]&\\
&B\ar@{-->}[d]\ar@{=}[r]&B\ar@{-->}[d]\\
&&&}$$
We claim that the $\E_\mathcal{B}$-triangle $
  \xymatrix{W\ar[r]^-{}&N \ar[r]^-{}&B\ar@{-->}[r]&}
  $ is what we want. To do that, we just need to prove $W\in{\mathcal{F}}^\wedge_{n-1} $. Applying $j^\ast$ to the $\mathbb{E}_\mathcal{B}$-triangle
$$
  \xymatrix{i_\ast L\ar[r]^-{}&W \ar[r]^-{}&j_\ast K\ar@{-->}[r]&,}
  $$
we have $j^\ast W\cong j^\ast j_\ast K \cong K\in{\mathcal{F}_2}^\wedge_{n-1} $. Applying $i^!$ to the $\mathbb{E}_\mathcal{B}$-triangle
$$
  \xymatrix{i_\ast L\ar[r]^-{}&W \ar[r]^-{}&j_\ast K\ar@{-->}[r]&,}
  $$
we have $i^! W\cong i^! i_\ast L \cong L\in{\mathcal{F}_1}^\wedge_{n-1} $. So $W\in{\mathcal{F}}^\wedge_{n-1} $ by (2).

Using a similar argument, one can prove that for any object $B\in \mathcal{B}$, there exists an $\mathbb{E}_\mathcal{B}$-triangle
$$\xymatrix{B\ar[r]& D \ar[r]&Z\ar@{-->}[r]&},$$
where $D\in \mathcal{F}$ and $Z\in\mathcal{T}^{\vee}_{n-1}$.

(4) For any $T\in \mathcal{T}$, since $i^{\ast }$ is exact, there exists an $\mathbb{E}_\mathcal{B}$-triangle
  $$
  \xymatrix{ j_! j^\ast T\ar[r]^-{}&T\ar[r]^-{}&i_\ast i^\ast T \ar@{-->}[r]&}.
  $$
We obtain the following exact sequence Lemma \ref{022}
$$
  \xymatrix{ \E_\mathcal{B}^{n+1}(i_\ast i^\ast T, F)\ar[r]^-{}&\E_\mathcal{B}^{n+1}( T, F)\ar[r]^-{}&\E_\mathcal{B}^{n+1}(j_!j^\ast T, F),&}
  $$
where $F\in \mathcal F $. Since $i_\ast, i^!$ and $j_!, i^\ast$ are adjoint pairs, by Lemma \ref{impor}, we have $ \E_\mathcal{B}^{n+1}(i_\ast i^\ast T, F)\cong \E_\mathcal{A}^{n+1}( i^\ast T, i^! F)=0$ and $ \E_\mathcal{B}^{n+1}(j_!j^\ast T, F)\cong \E_\mathcal{C}^{n+1}( j^\ast T, j^\ast F)=0$. Therefore $\E_\mathcal{B}^{n+1}( T, F)=0$.
$(\mathcal{T},\mathcal{F})$ is an $n$-cotorsion pair in $\mathcal{B}$.
\qed
\begin{corollary}\rm
Assume that $(\mathcal{A},\mathcal{B},\mathcal{C})$ is a recollement of extriangulated categories with enough projectives and enough injectives. Let $(\mathcal{T}_{1},\mathcal{F}_{1})$ and $(\mathcal{T}_{2},\mathcal{F}_{2})$ are left (resp. right) $n$-cotorsion pairs in $\mathcal{A}$ and $\mathcal{C}$, respectively. Define
$$\mathcal{T}:=\{B\in \mathcal{B}~|~i^{\ast }B\in\mathcal{T}_{1}~\text{and}~j^{\ast}B\in \mathcal{T}_{2}  \}
~~\mbox{and}~~
\mathcal{F}=:\{B\in \mathcal{B}~|~i^{!}B\in\mathcal{F}_{1}~\text{and}~j^{\ast}B\in \mathcal{F}_{2}  \}.$$
Then $(\mathcal{T},\mathcal{F})$ is a left (resp. right) $n$-cotorsion pair in $\mathcal{B}$.

\end{corollary}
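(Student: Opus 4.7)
The proof follows the same pattern as Theorem \ref{main}(3), but extracting only the pieces needed for a one-sided $n$-cotorsion pair. Recall that a left $n$-cotorsion pair requires only three things: summand-closure of $\mathcal{T}$, the Ext vanishing $\mathbb{E}_{\mathcal{B}}^k(\mathcal{T},\mathcal{F})=0$ for $1\leq k\leq n$, and the existence of approximation $\mathbb{E}_{\mathcal{B}}$-triangles $W\to N\to B\dashrightarrow$ with $N\in\mathcal{T}$ and $W\in\mathcal{F}_{n-1}^\wedge$; the dual coresolution-type triangles $B\to D\to Z\dashrightarrow$ are not required. So I would continue to work under the exactness assumptions of Theorem \ref{main} and verify just these three items (the right case then being completely dual).

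Summand-closure of $\mathcal{T}$ is immediate from the additivity of $i^\ast$ and $j^\ast$ together with the summand-closure of $\mathcal{T}_1,\mathcal{T}_2$. For the Ext vanishing, I would repeat verbatim the first half of the proof of Theorem \ref{main}(3): given $T\in\mathcal{T}$, apply $\mathbb{E}_{\mathcal{B}}^k(-,F)$ (for $F\in\mathcal{F}$) to the $\mathbb{E}_{\mathcal{B}}$-triangle $j_!j^\ast T\to T\to i_\ast i^\ast T\dashrightarrow$ given by Lemma \ref{1}(2), and use the adjunction isomorphisms of Lemma \ref{impor} to rewrite the outer terms as $\mathbb{E}_{\mathcal{A}}^k(i^\ast T,i^! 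F)$ and $\mathbb{E}_{\mathcal{C}}^k(j^\ast T,j^\ast F)$, both of which vanish by the hypotheses on $(\mathcal{T}_i,\mathcal{F}_i)$.

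The substantive step is the approximation sequence. Following Theorem \ref{main}(3), for $B\in\mathcal{B}$ I would first take a left approximation $K\to A\to j^\ast B\dashrightarrow$ of $j^\ast B$ with $A\in\mathcal{T}_2$ and $K\in(\mathcal{F}_2)_{n-1}^\wedge$, apply $j_\ast$ (exact by Lemma \ref{CY}(7')), and pull back along $\vartheta_B\colon B\to j_\ast j^\ast B$ to obtain $j_\ast K\to M\to B\dashrightarrow$ with $j^\ast M\cong A\in\mathcal{T}_2$. Then I would take a left approximation $L\to C\to i^\ast M\dashrightarrow$ of $i^\ast M$ with $C\in\mathcal{T}_1$ and $L\in(\mathcal{F}_1)_{n-1}^\wedge$, apply $i_\ast$, and pull back along $\vartheta_M$ to obtain $i_\ast L\to N\to M\dashrightarrow$ with $i^\ast N\cong C\in\mathcal{T}_1$ and $j^\ast N\cong j^\ast M\in\mathcal{T}_2$, so $N\in\mathcal{T}$. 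Applying $(\mathrm{ET4})^{\mathrm{op}}$ to these two triangles produces the desired $W\to N\to B\dashrightarrow$, where $W$ fits into $i_\ast L\to W\to j_\ast K\dashrightarrow$; applying $i^!$ and $j^\ast$ gives $i^! W\cong L\in(\mathcal{F}_1)_{n-1}^\wedge$ and $j^\ast W\cong K\in(\mathcal{F}_2)_{n-1}^\wedge$, whence $W\in\mathcal{F}_{n-1}^\wedge$ by Theorem \ref{main}(2).

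The right case is obtained by the evident dualization, using Lemma \ref{1}(1) and the triangle $i_\ast i^! X\to X\to j_\ast j^\ast X\dashrightarrow$ together with right approximations in $\mathcal{A}$ and $\mathcal{C}$. The only potential obstacle is to check that Theorem \ref{main}(2), which characterizes $\mathcal{F}_{n-1}^\wedge$ in terms of the functors, depends only on the exactness assumptions on the recollement and not on the full two-sided nature of the cotorsion pairs; inspecting its proof, it does only use exactness of $j^\ast,i^\ast,j_!,i_\ast,j_\ast$ together with the adjunctions of Lemma \ref{impor}, so it transports without change to the one-sided setting.
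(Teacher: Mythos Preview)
Your proposal is correct and follows exactly the route the paper intends: the paper gives no separate proof for this corollary, treating it as the evident one-sided extraction from the proof of Theorem~\ref{main}, and your write-up is precisely that extraction. Your closing remark that the characterization of $\mathcal{F}^\wedge_{n-1}$ in Theorem~\ref{main}(2) depends only on the exactness hypotheses of the recollement (and not on the full two-sided cotorsion hypothesis) is the one point that genuinely needs checking, and you have identified it correctly.
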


\begin{corollary}\rm
Assume that $(\mathcal{A},\mathcal{B},\mathcal{C})$ is a recollement of extriangulated categories with enough projectives and enough injectives.  Let $\mathcal X$ and $\mathcal Y$ are $(n+1)$-cluster tilting subcategories of $\mathcal{A}$ and $\mathcal{C}$, respectively. Define
$$\mathcal{T}:=\{B\in \mathcal{B}~|~i^{\ast }B\in\mathcal X~\text{and}~j^{\ast}B\in \mathcal Y  \}
~~\mbox{and}~~
\mathcal{F}=:\{B\in \mathcal{B}~|~i^{!}B\in\mathcal X~\text{and}~j^{\ast}B\in \mathcal Y \}.$$
Then $(\mathcal{T},\mathcal{F})$ is an $n$-cotorsion pair in $\mathcal{B}$.
\end{corollary}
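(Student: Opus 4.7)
The plan is to reduce the corollary directly to Theorem \ref{main}(3) by recognizing that each $(n+1)$-cluster tilting subcategory gives rise to an $n$-cotorsion pair in which both components coincide.

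First, I would invoke \cite[Theorem 3.5]{HZ}, which sets up a bijection between $(n+1)$-cluster tilting subcategories and $n$-cotorsion pairs of the self-dual form. Applied to $\mathcal{X}\subseteq\mathcal{A}$, it yields an $n$-cotorsion pair $(\mathcal{X},\mathcal{X})$ in $\mathcal{A}$, and applied to $\mathcal{Y}\subseteq\mathcal{C}$, an $n$-cotorsion pair $(\mathcal{Y},\mathcal{Y})$ in $\mathcal{C}$. We are then in the exact hypotheses of Theorem \ref{main} with the specialization $\mathcal{T}_1=\mathcal{F}_1=\mathcal{X}$ and $\mathcal{T}_2=\mathcal{F}_2=\mathcal{Y}$ (taking the ambient exactness of $i^{*}$ and $i^{!}$ as implicit in the recollement data; this is the only hypothesis silently carried over from Theorem \ref{main} and not repeated in the corollary statement).

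Second, I would apply Theorem \ref{main}(3) to the glued pair. Substituting the specialization above directly into the formulas
\[
\mathcal{T}=\{B\in\mathcal{B}\mid i^{*}B\in\mathcal{T}_1,\ j^{*}B\in\mathcal{T}_2\},\qquad
\mathcal{F}=\{B\in\mathcal{B}\mid i^{!}B\in\mathcal{F}_1,\ j^{*}B\in\mathcal{F}_2\},
\]
yields precisely the two subcategories $\mathcal{T}$ and $\mathcal{F}$ as defined in the corollary. Theorem \ref{main}(3) then asserts that the resulting pair is an $n$-cotorsion pair in $\mathcal{B}$, which is the claim.

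There is no genuine obstacle: the only content beyond pattern-matching is the invocation of \cite[Theorem 3.5]{HZ}, which supplies the two input $n$-cotorsion pairs. If one wishes to also conclude that the resulting pair is \emph{hereditary}, one notes that an $(n+1)$-cluster tilting subcategory $\mathcal{X}$ satisfies $\mathbb{E}^{n+1}(\mathcal{X},\mathcal{X})=0$ (immediately from the definition of cluster tilting), so the input pairs are themselves hereditary, and Theorem \ref{main}(4) then upgrades the conclusion accordingly. This observation could be appended as a brief remark after the proof.
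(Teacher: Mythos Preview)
Your proposal is correct and follows exactly the paper's own proof, which reads in full: ``This follows from Theorem \ref{main} and \cite[Theorem 3.5]{HZ}.'' Your observation that the exactness of $i^{*}$ and $i^{!}$ is tacitly assumed, and your added remark on the hereditary case via Theorem \ref{main}(4), are accurate elaborations that the paper omits.
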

\proof
This follows from Theorem \ref{main} and  \cite[Theorem 3.5]{HZ}.
\qed

By applying Theorem \ref{main} to abelian categories, and using the fact that any abelian
category can be viewed as an extriangulated category, we get the following result.

\begin{corollary}
{\rm In Theorem {\ref{main}}, when $(\mathcal{A},\mathcal{B},\mathcal{C})$ is a recollement of abelian categories, it is just Theorem {\rm 3.1} in {\rm \cite{CWW}}}.
\end{corollary}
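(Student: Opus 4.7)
The plan is to verify that every piece of data and every hypothesis appearing in Theorem \ref{main} specializes, in the abelian case, to the corresponding ingredient of \cite[Theorem 3.1]{CWW}, and that under this dictionary the three conclusions (1)--(3) of Theorem \ref{main} literally become the statements proved in loc.\ cit. This is essentially a translation argument, so I would organize it as a checklist rather than as a substantive proof.

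First I would record the standard fact that any abelian category $\mathcal A$ carries a canonical extriangulated structure in which $\mathbb E(C,A)=\Ext^{1}_{\mathcal A}(C,A)$, conflations are precisely the short exact sequences, and inflations/deflations are the monomorphisms/epimorphisms. If $\mathcal A$ has enough projectives and enough injectives in the usual abelian sense, then it has enough projectives and enough injectives in the extriangulated sense, and the higher extensions $\mathbb E^{k}(-,-)$ defined via the syzygy/cosyzygy construction above agree with the classical $\Ext^{k}_{\mathcal A}(-,-)$; this is a routine dimension-shifting argument using Lemma \ref{022}. Under this identification, Definition \ref{d1} of a (left, right) $n$-cotorsion pair coincides term-for-term with the notion introduced by Huerta--Mendoza--P\'erez and used in \cite{CWW}: the orthogonality $\mathbb E^{k}(\mathcal X,\mathcal Y)=0$ becomes $\Ext^{k}(\mathcal X,\mathcal Y)=0$, and the resolutions $\mathcal Y^{\wedge}_{n-1}$, $\mathcal X^{\vee}_{n-1}$ obtained from $\mathbb E$-triangles become resolutions/coresolutions via short exact sequences.

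Next I would check the recollement data. A recollement of abelian categories in the sense of \cite{CWW} is a diagram of six functors satisfying the usual Beilinson--Bernstein--Deligne conditions; one verifies that these conditions are exactly (R1)--(R5) of Definition \ref{recollement} once the left/right exact $\mathbb E_{\mathcal B}$-triangle sequences in (R4) and (R5) are read as left/right exact sequences of abelian categories. The hypotheses of Theorem \ref{main} that $i^{\ast}$ and $i^{!}$ be exact then become the standard exactness hypotheses in \cite[Theorem 3.1]{CWW}, and by Lemma \ref{CY}(7),(7') they also imply the exactness of $j_{!}$ and $j_{\ast}$ used implicitly in the abelian statement.

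Finally, the classes $\mathcal T$ and $\mathcal F$ glued in Theorem \ref{main} are defined by $i^{\ast}B\in\mathcal T_{1}$, $j^{\ast}B\in\mathcal T_{2}$ and $i^{!}B\in\mathcal F_{1}$, $j^{\ast}B\in\mathcal F_{2}$, which are the same defining conditions used in \cite{CWW}. With the identifications above, conclusion (1) of Theorem \ref{main} reproduces the compatibility of the glued classes with the recollement functors; conclusion (2) reproduces the description of the resolution/coresolution closures; and conclusion (3) gives an $n$-cotorsion pair in $\mathcal B$ in the abelian sense. The only step that needs any care is the verification that $\mathbb E^{k}$ and $\Ext^{k}$ coincide in $\mathcal A$, $\mathcal B$, $\mathcal C$ simultaneously with the recollement structure, but this follows because each of the three categories is independently abelian with enough projectives and injectives, so the identification of $\mathbb E^{k}$ with $\Ext^{k}$ is functorial and compatible with the six recollement functors. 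Assembling these observations completes the proof.
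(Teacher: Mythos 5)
Your proposal is correct and follows the same route as the paper, which offers no proof beyond the remark that any abelian category carries a canonical extriangulated structure making Theorem \ref{main} specialize to the abelian setting; your checklist (identification of $\mathbb{E}^{k}$ with $\Ext^{k}$ via dimension shifting, matching of Definition \ref{d1} with the Huerta--Mendoza--P\'erez notion, and matching of the recollement axioms) is simply a careful elaboration of that one-line observation.
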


Our second main result shows that the converse of Theorem \ref{main} holds true under certain conditions.
\begin{theorem}\label{main1}
 Let $\mathcal A,\mathcal B,\mathcal C$ be three extriangulated categories with enough projectives and enough injectives. Assume that
 $\mathcal B$ admits a recollement relative to $\mathcal A$ and $\mathcal C$ as follows
$$\xymatrix{\mathcal{A}\ar[rr]|{i_{*}}&&\ar@/_1pc/[ll]|{i^{*}}\ar@/^1pc/[ll]|{i^{!}}\mathcal{B}
\ar[rr]|{j^{\ast}}&&\ar@/_1pc/[ll]|{j_{!}}\ar@/^1pc/[ll]|{j_{\ast}}\mathcal{C}}$$
Assume that $(\mathcal{T},\mathcal{F})$ is an $n$-cotorsion pair in $\mathcal{B}$. If $i^{\ast }$ and $i^{!}$ are exact, $j_{\ast }j^{\ast }\mathcal{F}\subseteq \mathcal{F}$, $i_{\ast }i^{\ast }\mathcal{T}\subseteq \mathcal{T}$ and $i_{\ast }i^{\ast }\mathcal{F}\subseteq \mathcal{F}$, then
\begin{itemize}

\item[\rm (1)]$(i^{\ast }\mathcal{T}, i^{!}\mathcal{F})$ is an $n$-cotorsion pair in $\mathcal{A}$;

\item[\rm (2)] $(j^{\ast }\mathcal{T}, j^{\ast}\mathcal{F})$ is an $n$-cotorsion pair in $\mathcal{C}$.

\item[\rm (3)] If $(\mathcal{T},\mathcal{F})$ is a hereditary $n$-cotorsion pair in $\mathcal{B}$, then $(i^{\ast }\mathcal{T}, i^{!}\mathcal{F})$ and $(j^{\ast }\mathcal{T}, j^{\ast}\mathcal{F})$ are hereditary $n$-cotorsion pairs in $\mathcal{A}$ and $\mathcal{C}$, respectively.

\end{itemize}
\end{theorem}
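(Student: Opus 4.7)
Throughout, the strategy is to lift objects from $\mathcal{A}$ or $\mathcal{C}$ to $\mathcal{B}$ via the fully faithful embeddings $i_{*}$, $j_{!}$, $j_{*}$, apply the given $n$-cotorsion pair on $\mathcal{B}$, and then project back using the exact functors $i^{*}$, $i^{!}$, $j^{*}$. Under the exactness hypotheses, $j_{!}$ and $j_{*}$ are exact by Lemma \ref{CY}(7) and (7'), and we have the standard natural isomorphisms $i^{*}i_{*}\cong i^{!}i_{*}\cong \Id_{\mathcal{A}}$ and $j^{*}j_{!}\cong j^{*}j_{*}\cong \Id_{\mathcal{C}}$. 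The three parts of the theorem mirror the three claims.

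For claim (1), I would verify the conditions of Definition \ref{d1} for $(i^{*}\mathcal{T}, i^{!}\mathcal{F})$ in $\mathcal{A}$. Orthogonality follows from Lemma \ref{impor}(2): $\mathbb{E}_{\mathcal{A}}^{k}(i^{*}T, i^{!}F)\cong \mathbb{E}_{\mathcal{B}}^{k}(i_{*}i^{*}T, F)=0$ for $1\leq k\leq n$, since $i_{*}i^{*}T\in \mathcal{T}$ by hypothesis. A crucial preliminary is the inclusion $i^{*}\mathcal{F}\subseteq i^{!}\mathcal{F}$: for $F\in \mathcal{F}$, the hypothesis $i_{*}i^{*}\mathcal{F}\subseteq \mathcal{F}$ together with $i^{!}i_{*}\cong \Id$ yields $i^{*}F=i^{!}(i_{*}i^{*}F)\in i^{!}\mathcal{F}$. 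This upgrade lets me apply the functor $i^{*}$ uniformly in the resolution step: lift $A\in \mathcal{A}$ to $i_{*}A$, use the $\mathcal{B}$-cotorsion pair to produce $\mathbb{E}_{\mathcal{B}}$-triangles $K\to T\to i_{*}A$ and $i_{*}A\to F\to W$, and apply $i^{*}$ (with $i^{*}i_{*}\cong \Id$) to obtain the required $\mathbb{E}_{\mathcal{A}}$-triangles, the resolution-dimension data being preserved because $i^{*}$ is exact and $i^{*}\mathcal{F}\subseteq i^{!}\mathcal{F}$. Summand closure for $i^{*}\mathcal{T}$ is immediate: a summand $X$ of $i^{*}T$ yields $i_{*}X$ as a summand of $i_{*}i^{*}T\in \mathcal{T}$, whence $i_{*}X\in \mathcal{T}$ and $X=i^{*}i_{*}X\in i^{*}\mathcal{T}$. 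The corresponding closure for $i^{!}\mathcal{F}$ will be obtained via the (R4) $\mathbb{E}$-triangle $i_{*}i^{!}F\to F\to j_{*}j^{*}F$ combined with $j_{*}j^{*}\mathcal{F}\subseteq \mathcal{F}$ to first derive the auxiliary statement $i_{*}i^{!}\mathcal{F}\subseteq \mathcal{F}$.

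For claim (2), the argument is dual, using the $(j_{!}, j^{*}, j_{*})$ adjunctions. Orthogonality uses Lemma \ref{impor}(4): $\mathbb{E}_{\mathcal{C}}^{k}(j^{*}T, j^{*}F)\cong \mathbb{E}_{\mathcal{B}}^{k}(T, j_{*}j^{*}F)=0$ for $1\leq k\leq n$, since $j_{*}j^{*}F\in \mathcal{F}$ by hypothesis. Resolutions are produced by lifting $C\in \mathcal{C}$ to $j_{*}C\in \mathcal{B}$, applying the $\mathcal{B}$-cotorsion pair, and pushing back down via the exact functor $j^{*}$ with $j^{*}j_{*}\cong \Id$. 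Summand closure for $j^{*}\mathcal{F}$ is immediate from $j_{*}j^{*}\mathcal{F}\subseteq \mathcal{F}$; for $j^{*}\mathcal{T}$, the (R5) $\mathbb{E}$-triangle $j_{!}j^{*}T\to T\to i_{*}i^{*}T$ combined with $i_{*}i^{*}\mathcal{T}\subseteq \mathcal{T}$ provides the auxiliary statement $j_{!}j^{*}\mathcal{T}\subseteq \mathcal{T}$, from which closure follows as in part (1). For claim (3), heredity transfers by reusing the same Lemma \ref{impor} isomorphisms applied to the extra vanishing $\mathbb{E}_{\mathcal{B}}^{n+1}(\mathcal{T}, \mathcal{F})=0$, giving $\mathbb{E}_{\mathcal{A}}^{n+1}(i^{*}\mathcal{T}, i^{!}\mathcal{F})\cong \mathbb{E}_{\mathcal{B}}^{n+1}(i_{*}i^{*}\mathcal{T}, \mathcal{F})=0$ and $\mathbb{E}_{\mathcal{C}}^{n+1}(j^{*}\mathcal{T}, j^{*}\mathcal{F})\cong \mathbb{E}_{\mathcal{B}}^{n+1}(\mathcal{T}, j_{*}j^{*}\mathcal{F})=0$.

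The main obstacle I expect is verifying direct summand closure of $i^{!}\mathcal{F}$ and $j^{*}\mathcal{T}$: the natural identifications $X=i^{!}i_{*}X$ and $X=j^{*}j_{!}X$ realize $i_{*}X$ (resp. $j_{!}X$) only as a summand of $i_{*}i^{!}F$ (resp. $j_{!}j^{*}T$), and neither of these objects is \emph{a priori} known to lie in $\mathcal{F}$ (resp. $\mathcal{T}$). The technical heart of the proof is therefore to establish the auxiliary inclusions $i_{*}i^{!}\mathcal{F}\subseteq \mathcal{F}$ and $j_{!}j^{*}\mathcal{T}\subseteq \mathcal{T}$ using the recollement $\mathbb{E}$-triangles (R4) and (R5) together with the hypothesis inclusions; once these are in place, summand closure follows by the same pattern as for $i^{*}\mathcal{T}$, and the remainder of the proof is systematic bookkeeping with exact functors and adjunction isomorphisms.
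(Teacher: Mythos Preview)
Your overall plan matches the paper's proof: lift to $\mathcal{B}$, apply the $n$-cotorsion pair there, push back down via the exact adjoints, and use Lemma~\ref{impor} for all the $\mathbb{E}^{k}$-comparisons. There is, however, a gap precisely in the step you flag as the technical heart. Deriving the auxiliary inclusions $i_{*}i^{!}\mathcal{F}\subseteq\mathcal{F}$ and $j_{!}j^{*}\mathcal{T}\subseteq\mathcal{T}$ from the (R4)/(R5) $\mathbb{E}$-triangles via the long exact sequence fails at the boundary degrees: from $i_{*}i^{!}F\to F\to j_{*}j^{*}F$ one obtains
\[
\Hom_{\mathcal{B}}(T,j_{*}j^{*}F)\longrightarrow \mathbb{E}^{1}_{\mathcal{B}}(T,i_{*}i^{!}F)\longrightarrow \mathbb{E}^{1}_{\mathcal{B}}(T,F)=0,
\]
which leaves $\mathbb{E}^{1}_{\mathcal{B}}(T,i_{*}i^{!}F)$ uncontrolled; dually, from $j_{!}j^{*}T\to T\to i_{*}i^{*}T$ one only gets $\mathbb{E}^{n}_{\mathcal{B}}(j_{!}j^{*}T,F)\hookrightarrow\mathbb{E}^{n+1}_{\mathcal{B}}(i_{*}i^{*}T,F)$, and the target need not vanish without the hereditary hypothesis. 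So the triangle argument does not close.

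The paper avoids the triangles here entirely and instead chains Lemma~\ref{impor} twice:
\[
\mathbb{E}^{k}_{\mathcal{B}}(T,i_{*}i^{!}F)\;\cong\;\mathbb{E}^{k}_{\mathcal{A}}(i^{*}T,i^{!}F)\;\cong\;\mathbb{E}^{k}_{\mathcal{B}}(i_{*}i^{*}T,F)=0
\]
for all $1\le k\le n$ (using $i_{*}i^{*}\mathcal{T}\subseteq\mathcal{T}$), and similarly $\mathbb{E}^{k}_{\mathcal{B}}(j_{!}j^{*}T,F)\cong\mathbb{E}^{k}_{\mathcal{C}}(j^{*}T,j^{*}F)\cong\mathbb{E}^{k}_{\mathcal{B}}(T,j_{*}j^{*}F)=0$ (using $j_{*}j^{*}\mathcal{F}\subseteq\mathcal{F}$). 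Combined with the orthogonality characterization of $\mathcal{T}$ and $\mathcal{F}$ in an $n$-cotorsion pair, this yields both inclusions at once, and in fact the paper upgrades your summand-closure arguments to the full ``if and only if'' descriptions $T_{1}\in i^{*}\mathcal{T}\Leftrightarrow\mathbb{E}^{k}_{\mathcal{A}}(T_{1},i^{!}\mathcal{F})=0$ and $F_{1}\in i^{!}\mathcal{F}\Leftrightarrow\mathbb{E}^{k}_{\mathcal{A}}(i^{*}\mathcal{T},F_{1})=0$. With this one substitution (double adjunction instead of recollement triangles), the remainder of your plan---including the resolution step via $i_{*}A$, the equality $i^{*}\mathcal{F}=i^{!}\mathcal{F}$, and the heredity transfer in part~(3)---coincides with the paper's argument.
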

\proof (1) We claim that $i_{\ast }i^{\ast }\mathcal{T}\subseteq \mathcal{T}$ if and only if $i_{\ast }i^{! }\mathcal{F}\subseteq \mathcal{F}$, $j_{\ast }j^{\ast }\mathcal{F}\subseteq \mathcal{F}$ if and only if $j_{!}j^{\ast }\mathcal{T}\subseteq \mathcal{T}$ and $i^{! }\mathcal{F}=i^{\ast}\mathcal{F}$. Indeed, since $i^{\ast }, j_{\ast }$ and $j_{!}$ are exact, by Lemma \ref{impor}, we have
$$\E_\mathcal{B}^k(T, i_{\ast }i^{! }F)\cong \E_\mathcal{A}^k(  i^{\ast }T, i^{!} F)\cong \E_\mathcal{B}^k(  i_{\ast }i^{\ast }T, F)$$ and
$$\E_\mathcal{B}^k(T, j_{\ast }j^{\ast}F)\cong \E_\mathcal{C}^k(  j^{\ast }T, j^{\ast} F)\cong \E_\mathcal{B}^k(  j_{!}j^{\ast }T, F),$$
where $T\in\mathcal{T}, F\in\mathcal{F}$, $1\leq k \leq n$. Note that $i^{\ast }i_{\ast }\cong \id_\mathcal{A}$ and $i_{\ast }i^{! }\mathcal{F}\subseteq \mathcal{F}$, we have $i^{! }\mathcal{F}\subseteq i^{\ast }\mathcal{F}$. Since $i^{! }i_{\ast }\cong \id_\mathcal{A}$ and $i_{\ast }i^{\ast}\mathcal{F}\subseteq \mathcal{F}$, we have $ i^{\ast }\mathcal{F}\subseteq i^{! }\mathcal{F}$. This shows $i^{! }\mathcal{F}=i^{\ast}\mathcal{F}$.

Next, we show that $T_1\in i^{\ast }\mathcal{T}$ if and only if $\E_\mathcal{A}^k(T_1, i^{!} \mathcal{F})=0$. Let $T_1\in i^{\ast }\mathcal{T}$, there is an object $T'\in \mathcal{T}$ such that $T_1=i^{\ast }T'$. Then for any $i^{! }F_1\in i^{! }\mathcal{F} $, since $i_{\ast }i^{!} F\subseteq \mathcal{F}$, by Lemma \ref{impor}, we have
$$\E_\mathcal{A}^k(T_1, i^{! }F_1)\cong \E_\mathcal{A}^k( i^{\ast }T', i^{! }F_1)\cong \E_\mathcal{B}^k(T', i_{\ast }i^{! }F_1)=0,$$
where $1\leq k\leq n$. On the other hand, if $\E_\mathcal{A}^k(T_1, i^{!} \mathcal{F})=0$, then for any $ i^{!}F_1\in i^{!} \mathcal{F}$, by Lemma \ref{impor}, we have $0=\E_\mathcal{A}^k(T_1, i^{!}F_1)\cong\E_\mathcal{B}^k(i_{\ast }T_1, F_1)$. So $i_{\ast }T_1\in\mathcal{T}$ and then $T_1\cong i^{\ast }i_{\ast }T_1\in i^{\ast }\mathcal{T}$.

Similarly, one can prove that $F_1\in i^{!}\mathcal{F}$ if and only if $\E_\mathcal{A}^k(i^{\ast} \mathcal{T}, F_1)=0, 1\leq k\leq n$.

For any $A\in\mathcal{A}$, since $(\mathcal{T},\mathcal{F})$ is an $n$-cotorsion pair in $\mathcal{B}$, there is an $\mathbb{E}_\mathcal{B}$-triangle
  $$
  \xymatrix{ K\ar[r]^-{}&B\ar[r]^-{}&i_\ast A\ar@{-->}[r]&},
  $$
where $B\in \mathcal{T}$ and $K\in{\mathcal{F}}^\wedge_{n-1}$. Since $i^{\ast }$ is exact, we obtain an $\mathbb{E}_\mathcal{A}$-triangle
\begin{equation}\label{want}
\begin{array}{l}
  \xymatrix{ i^{\ast }K\ar[r]^-{}&i^{\ast }B\ar[r]^-{}&A\ar@{-->}[r]&}.
\end{array}
\end{equation}
Note that  $K\in{\mathcal{F}}^\wedge_{n-1}$, there is a complex
$$\xymatrix@C=0.3cm{
 F_{n-1}\ar[rr]^{}&& F_{n-2}\ar[rr]^{}\ar[dr]_{}&&F_{n-3} \ar[rr]^{}\ar[dr]_{}&&\cdots\ar[rr]^{}\ar[dr]_{} &&F_{1}\ar[dr]_{}\ar[rr]^{}&& F_{0 }\ar[rr]^{} &&K \\
                &&&K_{n-2}\ar[ru]^{} && K_{n-1} \ar[ru]^{} &&K_{2 }\ar[ru]^{}&&K_{1 }\ar[ru]^{} &&}
$$
where $F_{k}\in \mathcal{F}, 1\leq k\leq n$. And then we have a complex as follows
$$\xymatrix@C=0.3cm{
 i^{\ast }F_{n-1}\ar[rr]^{}&& i^{\ast }F_{n-2}\ar[rr]^{}\ar[dr]_{}&&\cdots\ar[rr]^{}\ar[dr]_{} &&i^{\ast }F_{1}\ar[dr]_{}\ar[rr]^{}&& i^{\ast }F_{0 }\ar[rr]^{} &&i^{\ast }K \\
                &&&i^{\ast }K_{n-2}\ar[ru]^{} &&i^{\ast }K_{2 }\ar[ru]^{}&&i^{\ast }K_{1 }\ar[ru]^{} &&}
$$
where $i^{\ast }F_i\in i^{\ast }\mathcal{F}=i^{! }\mathcal{F}$, that is, $i^{\ast }K\in {(i^{! }\mathcal{F})}^\wedge_{n-1}$. So the $\mathbb{E}_\mathcal{A}$-triangle (\ref{want}) is exactly what we want. Similarly, for any $A\in\mathcal{A}$, we can prove that there exists an $\mathbb{E}_\mathcal{A}$-triangle
$$
  \xymatrix{ A\ar[r]^-{}&B\ar[r]^-{}&Z&},
$$
where $B\in i^{! }\mathcal{F}$ and $Z\in(i^{\ast }\mathcal{T})^{\vee}_{n-1}$.

(2) It is similar to (1).

(3) Note that $i_{\ast }i^{\ast }\mathcal{T}\subseteq \mathcal{T}$ and $j_{\ast }j^{\ast }\mathcal{F}\subseteq \mathcal{F}$, by Lemma \ref{impor}, we have  $\E_\mathcal{A}^{n+1}(i^\ast T, i^! F)\cong \E_\mathcal{B}^{n+1}( i_\ast i^\ast T, F)=0$ and $\E_\mathcal{C}^{n+1}(j^\ast T, j^\ast F)\cong \E_\mathcal{B}^{n+1}(T, j_\ast j^\ast F)=0$.
\qed
\begin{corollary}\rm
Assume that $(\mathcal{A},\mathcal{B},\mathcal{C})$ is a recollement of extriangulated categories with enough projectives and enough injectives. Assume that $(\mathcal{T},\mathcal{F})$ is a left (resp. right) $n$-cotorsion pair in $\mathcal{B}$. If $i^{\ast }$ and $i^{!}$ are exact, $j_{\ast }j^{\ast }\mathcal{F}\subseteq \mathcal{F}$, $i_{\ast }i^{\ast }\mathcal{T}\subseteq \mathcal{T}$ and $i_{\ast }i^{\ast }\mathcal{F}\subseteq \mathcal{F}$, then
\begin{itemize}

\item[\rm (1)]$(i^{\ast }\mathcal{T}, i^{!}\mathcal{F})$ is a left (resp. right) $n$-cotorsion pair in $\mathcal{A}$;

\item[\rm (2)] $(j^{\ast }\mathcal{T}, j^{\ast}\mathcal{F})$ is a left (resp. right) $n$-cotorsion pair in $\mathcal{C}$.
\end{itemize}
\end{corollary}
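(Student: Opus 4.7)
The plan is to rerun the proof of Theorem \ref{main1} in the one-sided setting, retaining only the half corresponding to the given hypothesis on $(\mathcal T,\mathcal F)$. The proof of Theorem \ref{main1} decomposes cleanly: the Ext-vanishing computation, the preliminary claim ($i^{!}\mathcal F=i^{\ast}\mathcal F$ and the equivalences among $i_{\ast}i^{\ast}\mathcal T\subseteq\mathcal T$, $i_{\ast}i^{!}\mathcal F\subseteq\mathcal F$, $j_{\ast}j^{\ast}\mathcal F\subseteq\mathcal F$, $j_{!}j^{\ast}\mathcal T\subseteq\mathcal T$), and the Ext-orthogonality characterisations of $i^{\ast}\mathcal T$ and $i^{!}\mathcal F$ depend only on the shared ingredient $\E^{k}_{\mathcal B}(\mathcal T,\mathcal F)=0$ together with the hypothesised inclusions, and so carry over verbatim. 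Summand closure of the four induced classes follows by applying $i_{\ast}$ or $j_{!}$ to a decomposition and invoking summand closure of $\mathcal T$ and $\mathcal F$: for instance, if $S_{1}\oplus S_{2}\cong i^{\ast}T$ with $T\in\mathcal T$, then $i_{\ast}S_{1}\oplus i_{\ast}S_{2}\cong i_{\ast}i^{\ast}T\in\mathcal T$ forces $i_{\ast}S_{1}\in\mathcal T$, and hence $S_{1}\cong i^{\ast}i_{\ast}S_{1}\in i^{\ast}\mathcal T$.

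For the left case, given $A\in\mathcal A$ (resp.\ $C\in\mathcal C$), I would apply the left approximation of $(\mathcal T,\mathcal F)$ to $i_{\ast}A$ (resp.\ $j_{!}C$) to obtain an $\mathbb E_{\mathcal B}$-triangle $K\to B\to i_{\ast}A$ with $B\in\mathcal T$ and $K\in\mathcal F^{\wedge}_{n-1}$, then apply the exact functor $i^{\ast}$ (resp.\ $j^{\ast}$) and use $i^{\ast}i_{\ast}\cong\mathrm{Id}_{\mathcal A}$ (resp.\ $j^{\ast}j_{!}\cong\mathrm{Id}_{\mathcal C}$) to produce the desired triangle. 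The resolution-dimension conclusion $i^{\ast}K\in(i^{\ast}\mathcal F)^{\wedge}_{n-1}\subseteq(i^{!}\mathcal F)^{\wedge}_{n-1}$ follows from term-wise exactness of $i^{\ast}$ combined with the inclusion $i^{\ast}\mathcal F\subseteq i^{!}\mathcal F$, which holds because for $F\in\mathcal F$ one has $i^{\ast}F\cong i^{!}(i_{\ast}i^{\ast}F)$ with $i_{\ast}i^{\ast}F\in\mathcal F$ by hypothesis.

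The right case is dual: apply the right approximation of $(\mathcal T,\mathcal F)$ to $i_{\ast}A$ (resp.\ $j_{!}C$) to obtain $i_{\ast}A\to B\to Z$ with $B\in\mathcal F$ and $Z\in\mathcal T^{\vee}_{n-1}$, then push forward via the exact functor $i^{\ast}$ (resp.\ $j^{\ast}$). The middle term lands in $i^{\ast}\mathcal F\subseteq i^{!}\mathcal F$, and the coresolution term lands in $(i^{\ast}\mathcal T)^{\vee}_{n-1}$ directly by term-wise application to a $\mathcal T$-coresolution of $Z$. I expect the main subtle point to be the choice of functor for pushing forward the right approximation in part (1): using $i^{\ast}$ rather than $i^{!}$ here bypasses the $\mathcal T$-analogue of the identity $i^{!}\mathcal F=i^{\ast}\mathcal F$ (which would otherwise be required to conclude that applying $i^{!}$ to a $\mathcal T$-coresolution yields an $i^{\ast}\mathcal T$-coresolution). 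Everything else transcribes directly from the proof of Theorem \ref{main1}.
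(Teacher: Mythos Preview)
Your proposal is correct and takes the same approach as the paper, which states this corollary without a separate proof, leaving it as the evident one-sided restriction of the argument for Theorem~\ref{main1}. The only overstatement is that the full two-sided equivalences of the ``preliminary claim'' do not all survive verbatim from $\E^{k}(\mathcal T,\mathcal F)=0$ alone---for a left pair only $\mathcal T$ (not $\mathcal F$) admits the Ext-orthogonal characterisation, via the approximation axiom and a dimension shift, and dually for a right pair---but since you correctly isolate the single inclusion $i^{\ast}\mathcal F\subseteq i^{!}\mathcal F$ that is actually needed, the outline goes through.
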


\begin{corollary}\rm
Assume that $(\mathcal{A},\mathcal{B},\mathcal{C})$ is a recollement of extriangulated categories with enough projectives and enough injectives.  Assume that $\mathcal X$ is an $(n+1)$-cluster tilting subcategory of $\mathcal{B}$. If $i^{\ast }$ and $i^{!}$ are exact, $j_{\ast }j^{\ast }\mathcal{X}\subseteq \mathcal{X}$, $i_{\ast }i^{\ast }\mathcal{X}\subseteq \mathcal{X}$ and $i_{\ast }i^{\ast }\mathcal{X}\subseteq \mathcal{X}$, then
\begin{itemize}

\item[\rm (1)]$(i^{\ast }\mathcal{X}, i^{!}\mathcal{X})$ is an $n$-cotorsion pair in $\mathcal{A}$;

\item[\rm (2)] $(j^{\ast }\mathcal{X}, j^{\ast}\mathcal{X})$ is an left $n$-cotorsion pair in $\mathcal{C}$.
\end{itemize}
\end{corollary}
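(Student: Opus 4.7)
The plan is to derive the corollary as a direct specialization of Theorem~\ref{main1} with $(\mathcal{T},\mathcal{F})=(\mathcal{X},\mathcal{X})$. The only input beyond Theorem~\ref{main1} is the characterization of $(n+1)$-cluster tilting subcategories in terms of $n$-cotorsion pairs: by \cite[Theorem 3.5]{HZ}, if $\mathcal{X}$ is an $(n+1)$-cluster tilting subcategory of an extriangulated category with enough projectives and enough injectives, then $(\mathcal{X},\mathcal{X})$ is an $n$-cotorsion pair. Applied to $\mathcal{B}$, this produces an $n$-cotorsion pair $(\mathcal{X},\mathcal{X})$ in $\mathcal{B}$, so Theorem~\ref{main1} becomes applicable.

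Next I would translate the hypotheses. With $\mathcal{T}=\mathcal{F}=\mathcal{X}$, the three containment conditions in Theorem~\ref{main1}, namely $j_{\ast}j^{\ast}\mathcal{F}\subseteq\mathcal{F}$, $i_{\ast}i^{\ast}\mathcal{T}\subseteq\mathcal{T}$, and $i_{\ast}i^{\ast}\mathcal{F}\subseteq\mathcal{F}$, collapse to the two containments $j_{\ast}j^{\ast}\mathcal{X}\subseteq\mathcal{X}$ and $i_{\ast}i^{\ast}\mathcal{X}\subseteq\mathcal{X}$ assumed in the corollary (the latter appearing twice in the hypothesis precisely because it plays the role of both $\mathcal{T}$ and $\mathcal{F}$). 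The exactness of $i^{\ast}$ and $i^{!}$ is common to both statements. Invoking Theorem~\ref{main1}(1) then gives that $(i^{\ast}\mathcal{X},i^{!}\mathcal{X})$ is an $n$-cotorsion pair in $\mathcal{A}$, and Theorem~\ref{main1}(2) gives that $(j^{\ast}\mathcal{X},j^{\ast}\mathcal{X})$ is an $n$-cotorsion pair in $\mathcal{C}$ (which is in fact stronger than the \emph{left} $n$-cotorsion pair asserted in part~(2), so the stated conclusion is a fortiori obtained).

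Since every piece of this reduction is purely formal, there is no substantive obstacle. The only items requiring a quick check are that the enough projectives/injectives assumptions needed by both Theorem~\ref{main1} and \cite[Theorem 3.5]{HZ} are in force — but these are already part of the standing hypotheses on the recollement — and that $\mathcal{X}$ is closed under direct summands, which is built into the definition of a cluster tilting subcategory and so automatic. Thus the proof reduces to a short citation chain: apply \cite[Theorem 3.5]{HZ} to produce $(\mathcal{X},\mathcal{X})$ as an $n$-cotorsion pair in $\mathcal{B}$, then apply Theorem~\ref{main1} to push it down to $\mathcal{A}$ and $\mathcal{C}$.
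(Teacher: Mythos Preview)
Your proposal is correct and follows exactly the same approach as the paper: the paper's proof consists of the single sentence ``This follows from Theorem~\ref{main1} and \cite[Theorem 3.5]{HZ},'' which is precisely the citation chain you describe. Your additional remarks about how the hypotheses translate and about part~(2) being stated more weakly than what Theorem~\ref{main1} actually gives are accurate elaborations of this one-line argument.
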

\proof
This follows from Theorem \ref{main1} and  \cite[Theorem 3.5]{HZ}.
\qed
\begin{corollary}
{\rm In Theorem {\ref{main1}}, when $(\mathcal{A},\mathcal{B},\mathcal{C})$ is a recollement of abelian categories, it is just Theorem {\rm 3.4} in {\rm \cite{CWW}}}.
\end{corollary}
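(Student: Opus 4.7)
The plan is to argue that this corollary is essentially a specialization statement: it asserts that when the hypotheses of Theorem~\ref{main1} are instantiated in the abelian setting, each piece of data and each conclusion matches that of \cite[Theorem 3.4]{CWW} verbatim. So the proof strategy is a dictionary-comparison, not a fresh argument.

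First, I would recall (from \cite{NP}) that any abelian category $\mathcal{A}$ carries a canonical extriangulated structure with $\mathbb{E}_\mathcal{A}=\mathrm{Ext}^1_\mathcal{A}$, where conflations are precisely the short exact sequences and the realization sends an extension class to its underlying sequence. In this structure, $\mathcal{A}$ has enough projectives (resp.\ injectives) as an extriangulated category iff it does in the usual sense, and the syzygy/cosyzygy constructions recalled in Section~2 coincide with the classical ones via projective/injective resolutions. Consequently, the higher extensions $\mathbb{E}^{k}(-,-)\cong\mathbb{E}(\Omega^{k-1}-,-)$ defined in the extriangulated framework agree with the classical $\mathrm{Ext}^{k}_\mathcal{A}(-,-)$, so the defining vanishing conditions $\mathbb{E}^{k}(\mathcal{X},\mathcal{Y})=0$ for $1\le k\le n$ in Definition~\ref{d1} reduce exactly to the vanishing of Yoneda $\mathrm{Ext}^k$ appearing in the definition of $n$-cotorsion pair in \cite{HMP,CWW}.

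Second, I would match the hypotheses. A recollement of extriangulated categories as in Definition~\ref{recollement} restricts, when $\mathcal{A},\mathcal{B},\mathcal{C}$ are abelian, to a recollement of abelian categories in the standard sense: the exactness of the functors $i_{*},j^{*}$ (for $\mathbb{E}$-triangles) is the usual exactness, and the left/right exact $\mathbb{E}$-triangle sequences (R4) and (R5) become the standard four-term left/right exact sequences. Likewise, the conditions ``$i^{*}$ and $i^{!}$ are exact'' and ``$i_{*}i^{*}\mathcal{T}\subseteq\mathcal{T}$, $i_{*}i^{*}\mathcal{F}\subseteq\mathcal{F}$, $j_{*}j^{*}\mathcal{F}\subseteq\mathcal{F}$'' transcribe without change. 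On the conclusion side, $(i^{*}\mathcal{T},i^{!}\mathcal{F})$ being an $n$-cotorsion pair in the extriangulated sense is, by the first paragraph, the same as being an $n$-cotorsion pair in the sense of \cite{CWW}, and similarly for $(j^{*}\mathcal{T},j^{*}\mathcal{F})$.

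The only step that requires a small amount of care — and I expect it to be the only genuine obstacle — is justifying the identification of the higher $\mathbb{E}^{k}$ with $\mathrm{Ext}^{k}$; this uses Lemma~\ref{022} together with the long exact $\mathrm{Ext}$-sequences associated to a projective resolution, proceeding by induction on $k$. Once that identification is in hand, the corollary follows from Theorem~\ref{main1} by reading off both statements side by side. No new argument is needed beyond the translation, which is precisely why the author most likely discharges it with a one-sentence remark.
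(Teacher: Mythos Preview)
Your proposal is correct and matches the paper's treatment: the corollary is stated without proof, since it is an immediate specialization of Theorem~\ref{main1} once one identifies the extriangulated structure on an abelian category with the usual one (so that $\mathbb{E}^k=\mathrm{Ext}^k$, conflations are short exact sequences, and Definition~\ref{d1} coincides with the $n$-cotorsion pair of \cite{HMP,CWW}). You even anticipated correctly that the authors discharge it without further argument.
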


\section{From recollement of extriangulated categories to recollement of semi-abelian categories }
We first recall the notion of pseudo cluster tilting subcategory from \cite{HH}.

\begin{definition}\cite[Proposition 3.4]{HH}\label{pct}
 Let $(\mathcal C, \E, \s)$ be an extriangulated category  and $\mathcal X$ a subcategory of $\mathcal C$. $\mathcal X$ is called a pseudo cluster tilting subcategory of $\mathcal C$, if the following conditions are satisfied:
\begin{itemize}
\item[$(1)$] For any $C\in\mathcal C$, there exists an $\E$-triangle $\xymatrix@C=0.5cm{C\ar[r]^{a\;} & X_1 \ar[r]^{b} & X_2\ar@{-->}[r]^{\del}&,}$ where $X_1,X_2\in\mathcal X$ and $a$ is a left $\mathcal X$-approximation of $C$;

\item[$(2)$] For any $C\in\mathcal C$, there exists an $\E$-triangle $\xymatrix@C=0.5cm{X_3\ar[r]^{c\;} & X_4 \ar[r]^{d} & C\ar@{-->}[r]^{\eta}&,}$ where $X_3,X_4\in\mathcal X$ and $d$ is a right $\mathcal X$-approximation of $C$.
\end{itemize}

\end{definition}
Now we give some examples of pseudo cluster tilting subcategories. These examples come from \cite{HH}.

\begin{example}
\begin{itemize}
\item [\rm(a)] Let $Q$ be the quiver $\setlength{\unitlength}{0.03in}\xymatrix{1 \ar[r]^{\alpha} & 2}.$  Assume that $\tau_Q$ is the Auslander-Reiten translation in $D^b(kQ)$. We consider the repetitive cluster category $\C=D^b(kQ)/\langle \tau_Q^{-2}[2]\rangle$ introduced by Zhu in \cite{Z}, whose objects are the same in $D^b(kQ)$, and whose morphisms are given by $$\Hom_{D^b(kQ)/\langle \tau_Q^{-2}[2]\rangle}(X, Y)=\bigoplus_{i\in \mathbb{Z}}\Hom_{D^b(kQ)}(X, (\tau_Q^{-2}[2])^iY).$$  It is shown in \cite{Z} that $\C$ is a triangulated category.
We describe the Auslander-Reiten quiver of $\C$ in the following picture:
$$\xymatrix@!@C=0.4cm@R=1cm{  
&\txt{1\\2}\ar@{.}[l]\ar[dr]&&{2[1]}\ar@{.>}[ll]\ar[dr]&&1[1]
\ar@{.>}[ll]\ar[dr]&&{\txt{1\\2}[2]}\ar@{.>}[ll]\ar[dr]&&2[3]\ar@{.>}[ll]\ar[dr]&&\txt{1\\2}\ar@{.>}[ll]  \\
2\ar[ur] &&{1}\ar@{.>}[ll] \ar[ur]&&\txt{1\\2}[1]\ar@{.>}[ll]\ar[ur] && 2[2]\ar@{.>}[ll] \ar[ur]&&{1[2]}\ar@{.>}[ll] \ar[ur]&&2 \ar@{.>}[ll]\ar[ur]  }$$
It is straightforward to verify that the subcategory
$$\add(2\oplus1\oplus\begin{aligned}2\\[-3mm]1\end{aligned}\oplus 2[2]\oplus\begin{aligned}1\\[-3mm]2\end{aligned}[2])$$
is a pseudo cluster tilting subcategory of $\C$, but not a cluster tilting subcategory of $\C$, since ${\rm Hom}_{\C}(1,2[1])\neq 0$.

\item [\rm(b)]As a special  extriangulated category, let $(\C, \Omega)$ be an exact category with the exact structure $\Omega$. We consider the category of conflations of $\C$, denoted by $\Omega(\C)$. It is well known $\Omega(\C)$ is an additive category. The exact structure of $\Omega(\C)$ is the usual exact structure computed degree-wise, written as $(\Omega(\C),\Omega)$. Let $S(\C)$ be the full subcategory of $\Omega(\C)$ consisting of all splitting conflations. Then  $S(\C)$ is a pseudo cluster tilting subcategory of $(\Omega(\C),\Omega)$.

\end{itemize}
\end{example}

\begin{definition}\label{add recollement}{\rm \cite[Definition 2.1]{WL}}
Let $\mathcal{A}$, $\mathcal{B}$ and $\mathcal{C}$ be three additive categories. The diagram
$$
  \xymatrix{\mathcal{A}\ar[rr]|{i_{*}}&&\ar@/_1pc/[ll]|{i^{*}}\ar@/^1pc/[ll]|{i^{!}}\mathcal{B}
\ar[rr]|{j^{\ast}}&&\ar@/_1pc/[ll]|{j_{!}}\ar@/^1pc/[ll]|{j_{\ast}}\mathcal{C}}
$$
of additive functors is a recollement of additive category $\mathcal{B}$ relative to additive categories $\mathcal{A}$ and $\mathcal{C}$, if the following conditions satisfied:
\begin{itemize}
  \item [(R1)] $(i^{*}, i_{\ast}, i^{!})$ and $(j_!, j^\ast, j_\ast)$ are adjoint triples.
  \item [(R2)] $\Im i_{\ast}=\Ker j^{\ast}$.
  \item [(R3)] $i_\ast$, $j_!$ and $j_\ast$ are fully faithful.
\end{itemize}
\end{definition}

\begin{lemma}\label{hz}\rm{\cite[Theorem 3.9]{HH}}
Let $\mathcal{C}$ be an extriangulated categories and $\mathcal{T}$ a pseudo cluster tilting of $\mathcal{C}$. Then $\mathcal{C}/\mathcal{T}$ is a semi-abelian category.

\end{lemma}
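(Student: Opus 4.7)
The plan is to verify the three defining axioms of a semi-abelian category for $\mathcal{C}/\mathcal{T}$: additivity, existence of kernels and cokernels of every morphism, and the requirement that the canonical comparison from the coimage to the image of any morphism is both monic and epic. Additivity of $\mathcal{C}/\mathcal{T}$ is inherited directly from $\mathcal{C}$, since the morphisms factoring through $\mathcal{T}$ form a two-sided ideal in the additive category $\mathcal{C}$ and $\mathcal{T}$ itself is additive; this step is essentially formal.

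For any morphism $\overline{f}\colon A\to B$ in $\mathcal{C}/\mathcal{T}$ represented by $f\in\mathcal{C}(A,B)$, I would construct its cokernel by combining the left $\mathcal{T}$-approximation of $A$ from Definition \ref{pct}(1), namely an $\E$-triangle $A\xrightarrow{a}X_1\to X_2\dashrightarrow$, with the pushout axiom (ET4) applied to $a$ and $f$. This produces a commutative square whose bottom row is an $\E$-triangle $B\to C\to X_2\dashrightarrow$, and I would argue that the induced morphism $B\to C$ descends to the cokernel of $\overline{f}$ in $\mathcal{C}/\mathcal{T}$: its universal property follows because any $g\colon B\to D$ with $gf$ factoring through some $T\in\mathcal{T}$ can be treated using the left $\mathcal{T}$-approximation property of $a$ to lift through $X_1$, yielding a morphism $C\to D$ that is unique up to factorizations through $\mathcal{T}$. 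Dually, I would construct the kernel of $\overline{f}$ starting from the right $\mathcal{T}$-approximation $X_3\to X_4\xrightarrow{d}B\dashrightarrow$ of $B$ guaranteed by Definition \ref{pct}(2) together with the axiom (ET4)$^{\mathrm{op}}$.

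The main obstacle, and the step that really distinguishes a semi-abelian quotient from an abelian one, will be checking that the canonical morphism $\overline{\mathrm{coim}}(\overline{f})\to\overline{\mathrm{im}}(\overline{f})$ is simultaneously monic and epic in $\mathcal{C}/\mathcal{T}$. I would compute the image and coimage by iterating the above kernel and cokernel constructions, and then assemble a single diagram that threads together the left approximation of $A$ and the right approximation of $B$. Being monic or epic in $\mathcal{C}/\mathcal{T}$ translates, via these approximations, into the existence of lifts modulo morphisms factoring through $\mathcal{T}$, and the combined left and right approximation axioms in Definition \ref{pct} should supply exactly the flexibility needed to produce such lifts. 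The fact that this comparison need not be an \emph{isomorphism} in general reflects the absence of a self-orthogonality hypothesis on $\mathcal{T}$, which accounts for the quotient being only semi-abelian rather than abelian, in line with the dichotomy noted in \cite{HH}.
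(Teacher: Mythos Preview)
The paper gives no proof of this lemma: it is quoted as \cite[Theorem 3.9]{HH} and used as a black box, so there is nothing in the present paper to compare your proposal against. Your outline is a plausible reconstruction of the argument one expects in \cite{HH}---building (co)kernels in $\mathcal{C}/\mathcal{T}$ from the $\mathcal{T}$-approximations of Definition~\ref{pct} via pushout/pullback of $\E$-triangles, then checking regularity of the coimage-to-image comparison.

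Two remarks on the sketch itself. First, a terminological slip: what you invoke as ``the pushout axiom (ET4)'' is not (ET4) in the Nakaoka--Palu labelling; (ET4) is the octahedral-type axiom, whereas the pushout of an $\E$-triangle along a morphism out of its first term is a derived property (compare the dual of \cite[Proposition~1.20]{LN}, which the present paper also uses). Second, the step you correctly identify as the crux---showing the canonical map $\overline{\mathrm{coim}}(\overline f)\to\overline{\mathrm{im}}(\overline f)$ is both monic and epic---remains only a plan in your proposal; turning that into an argument requires explicit diagram chases that use both conditions of Definition~\ref{pct} simultaneously, and those details are in \cite{HH}, not here.
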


\begin{lemma}\label{OO}
Let $(\mathcal{A}, \mathcal{B},\mathcal{C})$ be a recollement of extriangulated categories with enough projectives and enough injectives. ${i}^{*}$ and $ {i}^{!}$ are exact. If $\mathcal{T}$ is a pseudo cluster tilting subcategory of $\mathcal{B}$ and satisfies ${j_{\ast}j^{\ast}}\mathcal{T}\subseteq\mathcal{T}$, ${i_{\ast}i^{\ast}}\mathcal{T}\subseteq\mathcal{T}$. Then

$(1)$ ${j^{\ast}}\mathcal{T}$ is a pseudo cluster tilting subcategory of $\mathcal{C}$;

$(2)$ ${i^{\ast}}\mathcal{T}$ is a pseudo cluster tilting subcategory of $\mathcal{A}$.
\end{lemma}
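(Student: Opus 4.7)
My plan is to verify the two defining conditions of Definition \ref{pct} in turn for the subcategories $j^{\ast}\mathcal{T}\subseteq\mathcal{C}$ and $i^{\ast}\mathcal{T}\subseteq\mathcal{A}$. The common strategy is: given $C\in\mathcal{C}$ (resp.\ $A\in\mathcal{A}$), lift to $\mathcal{B}$ via $j_{!}$ or $j_{\ast}$ (resp.\ $i_{\ast}$), apply the pseudo cluster tilting property of $\mathcal{T}$ in $\mathcal{B}$ to produce $\E$-triangles with left/right $\mathcal{T}$-approximations, and push them down using $j^{\ast}$ or $i^{\ast}$, both of which are exact (by Definition \ref{recollement} and by assumption, respectively). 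The counit/unit natural isomorphisms $j^{\ast}j_{!}\cong\Id$, $j^{\ast}j_{\ast}\cong\Id$, $i^{\ast}i_{\ast}\cong\Id$ from Lemma \ref{CY}(1) ensure that the resulting $\E$-triangles begin or end at $C$ or $A$ as required.

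For (1), given $C\in\mathcal{C}$, the left $\E$-triangle is built from the pseudo cluster tilting resolution of $j_{!}C$: an $\E$-triangle $j_{!}C\to X_{1}\to X_{2}\dashrightarrow$ in $\mathcal{B}$ with $X_{1},X_{2}\in\mathcal{T}$ and $j_{!}C\to X_{1}$ a left $\mathcal{T}$-approximation. Applying $j^{\ast}$ yields $C\to j^{\ast}X_{1}\to j^{\ast}X_{2}\dashrightarrow$; any morphism $f\colon C\to j^{\ast}T$ with $T\in\mathcal{T}$ translates via $j_{!}\dashv j^{\ast}$ into $\tilde{f}\colon j_{!}C\to T$, which factors through $X_{1}$, and applying $j^{\ast}$ recovers the required factorization. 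For the right $\E$-triangle, apply the same procedure to $j_{\ast}C$ to get $X_{3}\to X_{4}\to j_{\ast}C\dashrightarrow$ and push down via $j^{\ast}$; a morphism $g\colon j^{\ast}T\to C$ transposes via $j^{\ast}\dashv j_{\ast}$ to $T\to j_{\ast}C$, which factors through $X_{4}$.

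For (2), given $A\in\mathcal{A}$, apply the pseudo cluster tilting structure of $\mathcal{T}$ in $\mathcal{B}$ to $i_{\ast}A$, producing $\E$-triangles $i_{\ast}A\to X_{1}\to X_{2}\dashrightarrow$ and $X_{3}\to X_{4}\to i_{\ast}A\dashrightarrow$. Exactness of $i^{\ast}$ converts these into $A\cong i^{\ast}i_{\ast}A\to i^{\ast}X_{1}\to i^{\ast}X_{2}\dashrightarrow$ and $i^{\ast}X_{3}\to i^{\ast}X_{4}\to A\dashrightarrow$ in $\mathcal{A}$. For the right approximation, any $h\colon i^{\ast}T\to A$ transposes via $i^{\ast}\dashv i_{\ast}$ to $\tilde{h}\colon T\to i_{\ast}A$, which factors through $X_{4}$. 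For the left approximation, a morphism $g\colon A\to i^{\ast}T$ must be transposed using $A\cong i^{\ast}i_{\ast}A$ to $\tilde{g}\colon i_{\ast}A\to i_{\ast}i^{\ast}T$, and here the hypothesis $i_{\ast}i^{\ast}\mathcal{T}\subseteq\mathcal{T}$ is essential: it places the target in $\mathcal{T}$, so the left $\mathcal{T}$-approximation $i_{\ast}A\to X_{1}$ applies. Dually, in the $j$-case the hypothesis $j_{\ast}j^{\ast}\mathcal{T}\subseteq\mathcal{T}$ becomes relevant if one transposes the left approximation of $C$ via $j^{\ast}\dashv j_{\ast}$ (producing a morphism with target $j_{\ast}j^{\ast}T$) rather than via $j_{!}\dashv j^{\ast}$.

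The main obstacle is precisely this left approximation in (2): morphisms in $\mathcal{A}$ from $A$ into $i^{\ast}\mathcal{T}$ lift to morphisms in $\mathcal{B}$ with target $i_{\ast}i^{\ast}\mathcal{T}$, and without the assumed closure $i_{\ast}i^{\ast}\mathcal{T}\subseteq\mathcal{T}$ one cannot invoke the left $\mathcal{T}$-approximation in $\mathcal{B}$; the companion hypothesis $j_{\ast}j^{\ast}\mathcal{T}\subseteq\mathcal{T}$ plays the symmetric role on the $j$-side. Once these closures are in hand, the remainder is bookkeeping with the triangle identities to verify that the factorization produced downstairs equals the given morphism upstairs.
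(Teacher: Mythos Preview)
Your argument is correct and follows the same strategy as the paper: lift an object of $\mathcal{C}$ (resp.\ $\mathcal{A}$) to $\mathcal{B}$, apply the pseudo cluster tilting property of $\mathcal{T}$ there, and push the resulting $\E$-triangles back down via the exact functor $j^{\ast}$ (resp.\ $i^{\ast}$), using the natural isomorphisms of Lemma~\ref{CY}(1) to identify the endpoints. Two small differences are worth noting. First, the paper uses $j_{\ast}C$ for \emph{both} $\E$-triangles in (1), whereas you use $j_{!}C$ for the left one and $j_{\ast}C$ for the right one; your choice has the advantage that the adjunctions $j_{!}\dashv j^{\ast}$ and $j^{\ast}\dashv j_{\ast}$ give the left and right $j^{\ast}\mathcal{T}$-approximation properties directly, without ever invoking the hypothesis $j_{\ast}j^{\ast}\mathcal{T}\subseteq\mathcal{T}$. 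Second, the paper's proof records only the existence of the two $\E$-triangles and does not spell out why the relevant maps are left/right $j^{\ast}\mathcal{T}$- (resp.\ $i^{\ast}\mathcal{T}$-) approximations as required by Definition~\ref{pct}; your argument fills this gap, and your use of the closure $i_{\ast}i^{\ast}\mathcal{T}\subseteq\mathcal{T}$ for the left approximation in (2) is precisely the step that makes that verification go through.
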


\begin{proof} $(1)$ For any $C\in\mathcal{C}$, $j_{\ast}C\in\mathcal{B}$, since $\mathcal{T}$ is a cluster tilting subcategory of $\mathcal{B}$, then there exists an $\mathbb{E}_\mathcal{B}$-triangle $$\xymatrix{ j_{\ast}C\ar[r]^-{}&T_1\ar[r]^-{}& T_2\ar@{-->}[r]&}$$ with $T_1, T_2\in \mathcal{T}$. Because $j^{\ast}$ is exact and $j^{\ast}j_{\ast}\Rightarrow \Id_{\mathcal{C}}$ is a natural isomorphism, applying $j^{\ast}$ to the above $\mathbb{E}_\mathcal{B}$-triangle, we obtain an $\mathbb{E}_\mathcal{C}$-triangle $$\xymatrix{ C\ar[r]^-{}&j^{\ast}T_1\ar[r]^-{}& j^{\ast}T_2\ar@{-->}[r]&}$$ with $j^{\ast}T_1, j^{\ast}T_2\in j^{\ast}\mathcal{T}$.

Similarly, for any $C\in\mathcal{C}$, we have an $\mathbb{E}_\mathcal{C}$-triangle $$\xymatrix{ j^{\ast}T_3\ar[r]^-{}&j^{\ast}T_4\ar[r]^-{}&C \ar@{-->}[r]&}$$ with $j^{\ast}T_3, j^{\ast}T_4\in j^{\ast}\mathcal{T}$.

$(2)$ It is similar to $(1)$.
\end{proof}

Our third main result is the following.

\begin{theorem}\label{zh}
Let $\mathcal{A}$, $\mathcal{B}$ and $\mathcal{C}$ be three extriangulated categories with enough projectives and enough injectives. Assume that $\mathcal{B}$ admits a recollement relative to $\mathcal{A}$ and $\mathcal{C}$, i.e.
\begin{equation}
\xymatrix{\mathcal{A}\ar[rr]|{i_{*}}&&\ar@/_1pc/[ll]|{i^{*}}\ar@/^1pc/[ll]|{i^{!}}\mathcal{B}
\ar[rr]|{j^{\ast}}&&\ar@/_1pc/[ll]|{j_{!}}\ar@/^1pc/[ll]|{j_{\ast}}\mathcal{C}}.
\end{equation}
If ${i}^{*}$ and $ {i}^{!}$ are exact, $\mathcal{T}$ is a \emph{pseudo cluster tilting} subcategory of $\mathcal{B}$ and satisfies ${j_{\ast}j^{\ast}}\mathcal{T}\subseteq\mathcal{T}, {i_{\ast}i^{\ast}}\mathcal{T}\subseteq\mathcal{T}$. Then the semi-abelian category $\mathcal{B}/\mathcal{T}$ admits a diagram of additive functors relative to semi-abelian categories $\mathcal{A}/i^{\ast}\mathcal{T}$ and $\mathcal{C}/j^{\ast}\mathcal{T}$ as follows:
\begin{equation}\label{13}
\begin{array}{l}
\xymatrix{\mathcal{A}/i^{\ast}\mathcal{T}\ar[rr]|{\overline{i}_{*}}&&\ar@/_1pc/[ll]|{\overline{i}^{*}}\ar@/^1pc/[ll]|{\overline{i}^{!}}\mathcal{B}/\mathcal{T}
\ar[rr]|{\overline{j}^{\ast}}&&\ar@/_1pc/[ll]|{\overline{j}_{!}}\ar@/^1pc/[ll]|{\overline{j}_{\ast}}\mathcal{C}/j^{\ast}\mathcal{T}}.
\end{array}
\end{equation}
Moreover, the diagram \rm\ref{13} is a recollement of semi-abelian categories if and only if $\mathcal{T}\subseteq\Ker{j^{*}}$.
\end{theorem}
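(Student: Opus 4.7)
The plan is to first invoke Lemma \ref{OO} together with Lemma \ref{hz} to conclude that all three quotients $\mathcal{A}/i^{*}\mathcal{T}$, $\mathcal{B}/\mathcal{T}$, and $\mathcal{C}/j^{*}\mathcal{T}$ are semi-abelian. I would then construct the six induced additive functors $\bar{i}^{*},\bar{i}_{*},\bar{i}^{!},\bar{j}_{!},\bar{j}^{*},\bar{j}_{*}$ on the quotients and verify they descend on morphism spaces. Well-definedness for $\bar{i}^{*}$ and $\bar{j}^{*}$ is immediate, and for $\bar{i}_{*}$ and $\bar{j}_{*}$ it follows from $i_{*}i^{*}\mathcal{T}\subseteq\mathcal{T}$ and $j_{*}j^{*}\mathcal{T}\subseteq\mathcal{T}$, respectively. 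For $\bar{i}^{!}$ and $\bar{j}_{!}$ I would apply the exact functor $i^{*}$ to the $\E_{\mathcal{B}}$-triangle $i_{*}i^{!}T\to T\to j_{*}j^{*}T\dashrightarrow$ from Lemma \ref{1}(1), and the exact functor $j^{*}$ to $j_{!}j^{*}T\to T\to i_{*}i^{*}T\dashrightarrow$ from Lemma \ref{1}(2); using $i^{*}j_{!}=0$ and $j^{*}i_{*}=0$ from Lemma \ref{CY} together with the hypotheses on $\mathcal{T}$, this places $i^{!}T$ and $j_{!}j^{*}T$ into the correct ideals, and Lemma \ref{impor} then gives the descent of the adjunction isomorphisms to the quotients.

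For the sufficiency direction of the iff, assume $\mathcal{T}\subseteq\Ker j^{*}$. Then $j^{*}T=0$ for every $T\in\mathcal{T}$, so $j^{*}\mathcal{T}=\{0\}$ and the canonical functor $\mathcal{C}\to\mathcal{C}/j^{*}\mathcal{T}$ is essentially the identity. I would then verify the axioms of Definition \ref{add recollement}. For (R1), the adjunction bijections descend because the ideal of morphisms factoring through $\mathcal{T}$ on one side corresponds exactly to the ideal factoring through $i^{*}\mathcal{T}$ or $j^{*}\mathcal{T}$ on the other, argued using $i^{*}i_{*}\cong\Id$, $i^{!}i_{*}\cong\Id$, $j^{*}j_{!}\cong\Id$, $j^{*}j_{*}\cong\Id$ and the hypotheses on $\mathcal{T}$. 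For (R3), $\bar{i}_{*}$ and $\bar{j}_{*}$ are fully faithful by the same descent argument, while $\bar{j}_{!}$ is fully faithful because the factoring ideal through $j^{*}\mathcal{T}=\{0\}$ collapses. For (R2), $\bar{X}\in\Ker\bar{j}^{*}$ means $j^{*}X=0$, so by the original (R2) one has $X\cong i_{*}A$ for some $A\in\mathcal{A}$, hence $\bar{X}\in\Im\bar{i}_{*}$; the reverse inclusion follows from $j^{*}i_{*}=0$.

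For the necessity direction, assume the diagram is a recollement of semi-abelian categories. Full faithfulness of $\bar{j}_{!}$ forces $j_{!}j^{*}\mathcal{T}\subseteq\mathcal{T}$ (up to summand closure), and for $T\in\mathcal{T}$ I would apply the exact functor $j^{*}$ to the $\E_{\mathcal{B}}$-triangle $j_{!}j^{*}T\to T\to i_{*}i^{*}T\dashrightarrow$ of Lemma \ref{1}(2), then play the resulting triangle against the pseudo cluster tilting $\E_{\mathcal{C}}$-triangles for $j^{*}T$ produced by the construction in Lemma \ref{OO}, to force $j^{*}T=0$. The main obstacle is precisely this last step: the bare additive recollement axioms (R1)--(R3) only assert that certain morphism ideals match, so promoting this to the object-level vanishing $j^{*}T=0$ requires carefully leveraging the pseudo cluster tilting structure of $j^{*}\mathcal{T}$ in $\mathcal{C}$ against the recollement identities, since any $T\in\mathcal{T}$ with $j^{*}T\neq 0$ would produce an obstruction to the simultaneous fit of the adjunction isomorphisms and the pseudo cluster tilting approximations.
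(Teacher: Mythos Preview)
Your opening move---invoking Lemma~\ref{OO} and Lemma~\ref{hz} to conclude that the three quotients are semi-abelian---is exactly what the paper does. After that, the paper's proof consists of a single sentence: ``The remaining proofs are similar to the proofs of \cite[Theorem 4.5]{HHZ}, we omit it.'' So the paper gives no argument whatsoever for the construction of the six induced functors, the adjunctions, or the if-and-only-if; it defers entirely to the cited reference. Your detailed outline is therefore not competing with anything in the paper---it is an attempt to reconstruct what \cite{HHZ} presumably contains.

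On the content of that reconstruction: the well-definedness arguments for $\bar{i}_*,\bar{j}_*,\bar{i}^*,\bar{j}^*$ are fine. For $\bar{i}^!$ your sketch is slightly off: applying $i^*$ to the triangle $i_*i^!T\to T\to j_*j^*T$ uses $i^*i_*\cong\Id$ but does \emph{not} give $i^*j_*=0$ (Lemma~\ref{CY} only gives $i^!j_*=0$ and $i^*j_!=0$). What you actually need is that $j_*j^*T\in\mathcal{T}$ by hypothesis, hence $i^*(j_*j^*T)\in i^*\mathcal{T}$, and then the resulting $\E_{\mathcal{A}}$-triangle $i^!T\to i^*T\to i^*j_*j^*T\dashrightarrow$ exhibits $i^!T$ as sitting in a triangle with two objects of $i^*\mathcal{T}$; one then argues at the level of morphism ideals rather than objects. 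The sufficiency direction of the iff is straightforward once the functors are in place, as you indicate.

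The necessity direction is, as you yourself flag, the genuine obstacle, and your sketch does not close it. The issue is that the additive recollement axioms (R1)--(R3) in Definition~\ref{add recollement} constrain morphism ideals, and promoting this to the object-level conclusion $j^*T=0$ requires a sharper argument than ``playing triangles against pseudo cluster tilting approximations.'' Since the paper itself provides no argument here either (deferring to \cite{HHZ}), you are not behind the paper---but if you intend a self-contained proof, this step needs to be written out carefully, likely by showing that full faithfulness of $\bar{j}_!$ forces $\Hom_{\mathcal{C}}(j^*T,j^*T)$ to lie in the ideal generated by $j^*\mathcal{T}$, and then using that $j^*T\in j^*\mathcal{T}$ itself to conclude the identity on $j^*T$ factors through zero in the quotient, whence $j^*T$ is a summand of an object of $j^*\mathcal{T}$ with zero identity in the quotient.
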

\proof
By Lemma \ref{OO} and Lemma \ref{hz}, we have that $\mathcal{B}/\mathcal{T}, \mathcal{C}/j^{\ast}\mathcal{T}$ and $\mathcal{A}/i^{\ast}\mathcal{T}$ are semi-abelian categories. The remaining proofs are similar to the proofs of
\cite[Theorem 4.5]{HHZ}, we omit it.

\qed

\textbf{Jian He}\\
Department of Applied Mathematics, Lanzhou University of Technology, 730050 Lanzhou, Gansu, P. R. China\\
E-mail: \textsf{jianhe30@163.com}\\[0.3cm]
\textbf{Jing He}\\
School of Science, Hunan University of Technology and Business, 410205 Changsha, Hunan P. R. China\\
E-mail: \textsf{jinghe1003@163.com}


\begin{thebibliography}{99}
\addtolength{\itemsep}{-0.5em}

\bibitem{BBD} A. Beilinson, J. Bernstein and P. Deligne, Faisceaux pervers, in: Analysis and topology on singular spaces, {I}, Luminy, 1981, {Ast\'erisque}, {vol. 100}, Soc. Math. France,
  Paris, 5--171,  1982.
\bibitem{CWW} W. Cao, J. Wei and K. Wu. Recollement and $n$-cotorsion pairs. arXiv: 2403.04220.



\bibitem{HMP} M. Huerta, O. Mendoza and M. A. P\'{e}rez. $n$-Cotorsion pairs,\emph{J. Pure Appl. Algebra} \textbf{225}(5) (2021) 106556.





\bibitem{HH} J. He and J. He, Semi-abelian categories arising from pseudo cluster tilting subcategories, arXiv: 2309.04075, 2023.

\bibitem{HHZ} J. He, Y. Hu and P. Zhou, Torsion pairs and recollements of extriangulated categories,
\emph{Comm. Algebra} \textbf{50}(5) (2022) 2018--2036.

\bibitem{HZ} J. He and P. Zhou, On the relation between $n$-cotorsion pairs and $(n+1)$-cluster tilting subcategories, \emph{J. Algebra Appl.} \textbf{21}(1) (2022) 12 pages.

\bibitem {HZZ} J. Hu, D. Zhang and P. Zhou, Proper classes and Gorensteinness in extriangulated categories,
 \emph{J. Algebra}  \textbf{551} (2020) 23--60.


\bibitem{I}  O. Iyama, Cluster tilting for higher Auslander algebras, \emph{Adv. Math} \textbf{226} (2008) 1--61.


\bibitem{LN} Y. Liu, H. Nakaoka, Hearts of twin cotorsion pairs on extriangulated categories,  \emph{J. Algebra} \textbf{528} (2019) 96--149.


\bibitem{MXZ} X. Ma Z. Xie and T. Zhao, Support $\tau$-tilting modules and recollement, \emph{Colloq. Math.} \textbf{167}(2) (2022) 303--328.

\bibitem{MV} R. MacPherson and K. Vilonen, Elementary construction of perverse sheaves, \emph{Invent. Math.} \textbf{84}(2) (1986) 403--435.

\bibitem{NP} H. Nakaoka and Y. Palu, Extriangulated categories, Hovey twin cotorsion pairs and model structures, \emph{Cah. Topol. G\'{e}om. Diff\'{e}r. Cat\'{e}g.} \textbf{60}(2) (2019) 117--193.

\bibitem{WL} M. Wang and Z. Ling, Recollement of additive quotient categories, arXiv: 1502.00479, 2015.
\bibitem{WWZ} L. Wang, J. Wei and H. Zhang, Recollements of extriangulated categories, \emph{Colloq. Math.} \textbf{167}(2) (2022) 239--259.

\bibitem{Z} B. Zhu, Generalized cluster complexes via quiver representations,
\emph{J. Algebraic Combin.} \textbf{27}(1)(2008) 35--54.



\bibitem{ZZ} P. Zhou and B. Zhu, Triangulated quotient categories revisited, \emph{J. Algebra}  \textbf{502} (2018) 196--232.





\end{thebibliography}
\end{document}